\theoremstyle{plain}
\newtheorem{lemma}{Lemma}
\newtheorem*{theorem}{Theorem}
\newtheorem{prop}[lemma]{Proposition}
\newtheorem*{prop*}{Proposition}
\newtheorem*{lemma*}{Lemma}
\newtheorem*{theorem*}{Theorem}
\theoremstyle{definition} 
\newtheorem*{def*}{Definition}
\theoremstyle{remark}
\newtheorem*{remark}{Remarks}
\newtheorem*{acks}{Acknowledgements}
\title[An $L^1$-estimate for spectral multipliers]{An $L^1$-estimate for certain spectral multipliers associated with the Ornstein--Uhlenbeck operator}
\author{Mikko Kemppainen}
\address{Department of Mathematics and Statistics, University of Helsinki,
Gustaf H\"allstr\"omin katu 2b, FI-00014 Helsinki, Finland}
\email{mikko.k.kemppainen@helsinki.fi}
\begin{document}

\begin{abstract}
  We study a class of spectral multipliers $\phi(L)$ for the
  Ornstein--Uhlenbeck operator $L$ arising from the Gaussian measure
  on $\mathbb{R}^n$ and find a sufficient condition for integrability
  of $\phi(L)f$ in terms of the admissible conical square function
  and a maximal function.
\end{abstract}

\subjclass[2010]{42B25 (Primary); 42B15, 42B30 (Secondary)}
\keywords{conical square function, admissibility function, Mehler kernel, Gaussian measure}

\maketitle


\section{Introduction}

On the Euclidean space $\mathbb{R}^n$, the \emph{Ornstein--Uhlenbeck operator}
\begin{equation*}
  L = -\frac{1}{2} \Delta + x\cdot\nabla
\end{equation*}
is associated with the Gaussian measure
\begin{equation*}
  d\gamma (x) = \pi^{-n/2} e^{-|x|^2} \, dx
\end{equation*}
by the Dirichlet form
\begin{equation*}
  \int_{\mathbb{R}^n} (Lf) g \, d\gamma = \frac{1}{2} \int_{\mathbb{R}^n}
  \nabla f \cdot \nabla g \, d\gamma , \quad f,g\in C^\infty_0(\mathbb{R}^n) .
\end{equation*}
It has discrete spectrum $\sigma (L) = \{ 0,1,2,\ldots \}$ on 
$L^2(\gamma)$, and an orthonormal
basis of eigenfunctions is given by \emph{Hermite polynomials}
$h_\beta$, $\beta\in\mathbb{N}^n$, so that $Lh_\beta = |\beta| h_\beta$.
Moreover, it generates a (positive) 
diffusion semigroup on $L^2(\gamma)$ which can be 
expressed as
\begin{equation*}
  e^{-tL}f(x) = \int_{\mathbb{R}^n} M_t(x,y) f(y) \, d\gamma (y) , \quad t>0 ,
\end{equation*}
by means of the \emph{Mehler kernel} (see \cite{S})
\begin{equation*}
  M_t(x,y) = \frac{1}{(1-e^{-2t})^{n/2}}
  \exp \Big( -\frac{e^{-t}}{1-e^{-2t}} |x-y|^2 \Big)
  \exp \Big( \frac{e^{-t}}{1+e^{-t}} (|x|^2 + |y|^2) \Big) .
\end{equation*}
The semigroup is contractive on $L^p(\gamma)$ for each 
$1\leq p \leq \infty$, and acts conservatively
so that $e^{-tL}1 = 1$.
Therefore, E. M. Stein's theory \cite{ST} is applicable in studying the
boundedness of spectral multipliers $\phi (L)$ defined as
$\phi(L)h_\beta = \phi (|\beta|) h_\beta$ for $\beta\in\mathbb{N}^n$.
More precisely, \cite[Corollary 3, p. 121]{ST} 
guarantees $L^p(\gamma)$-boundedness with
$1 < p < \infty$, for any spectral multiplier of
`Laplace transform type', i.e. of the form
\begin{equation*}
  \phi (\lambda ) = \lambda \int_0^\infty \Phi (t) e^{-t\lambda} \, dt ,
  \quad \lambda \geq 0,
\end{equation*}
where $\Phi : (0,\infty) \to \mathbb{C}$ is bounded. In particular,
the imaginary powers
$L^{i\tau}$, $\tau\in\mathbb{R}$, arising from 
$\Phi(t) = t^{-i\tau} / \Gamma(1-i\tau)$ 
are bounded on $L^p(\gamma)$.
The general theory was improved by
M. Cowling \cite{C} who showed that,
for a given $p\in (1,\infty)$,
$\phi(L)$ is bounded on $L^p(\gamma)$ as soon as $\phi$ 
extends analytically to a sector of angle greater than $\pi |1/p - 1/2|$.
(See also the more recent development \cite{CD}.)

The $L^1$-theory in the Gaussian setting is quite problematic.
Although finite linear combinations of Hermite polynomials are
dense in $L^1(\gamma)$, the spectral projections onto their eigenspaces
are not $L^1(\gamma)$-bounded. Moreover, $tLe^{-tL}$ is bounded (uniformly)
on $L^p(\gamma)$ only when $1 < p < \infty$ (see \cite[Chapter 5]{J}).

A Gaussian weak $(1,1)$-type estimate for spectral multipliers of Laplace 
transform type was established by 
J. Garc\'{\i}a-Cuerva et al. \cite{GMST}. Moreover, 
in \cite{GMMST} they showed that requiring analyticity
of $\phi$ in a sector of angle smaller than $\arcsin |1-2/p|$
will not alone suffice for boundedness of $\phi(L)$ on $L^p(\gamma)$.
Observing that $\arcsin |1-2/p| \to \pi /2$ as $p\to 1$ is in line with
the fact that the spectrum of $L$ on $L^1(\gamma)$ is the (closed)
right half-plane. Furthermore, $L^1(\gamma)$-boundedness of dilation invariant spectral multipliers
for $L$ was characterised in \cite[Theorem 3.5 (ii)]{HMM}.

The main obstruction in developing a metric theory of Hardy spaces in
the Gaussian setting arises from the fact that the 
rapidly decaying measure $\gamma$ is non-doubling, that is, for every $t>0$
\begin{equation*}
  \frac{\gamma (B(x,2t))}{\gamma (B(x,t))} \longrightarrow \infty , \quad
  \textup{as} \quad |x|\to\infty .
\end{equation*}

G. Mauceri and S. Meda overcame this problem in \cite{MM} 
and developed an atomic theory for a Gaussian Hardy space which relies
of the fact that the Gaussian measure
behaves well locally with respect to the \emph{admissibility function}
\begin{equation*}
  m(x) = \min (1 , |x|^{-1}) , \quad x\in\mathbb{R}^n .
\end{equation*}
Indeed, $\gamma$ is doubling on families of `admissible' Euclidean balls
\begin{equation*}
  \mathscr{B}_\alpha = \{ B(x,t) : 0 < t \leq \alpha m(x) \} , \quad \alpha \geq 1,
\end{equation*}
in the sense that for all $\lambda \geq 2$ we have
\begin{equation}
\label{doubling}
  \gamma (\lambda B) \leq e^{4\lambda^2\alpha^2} \gamma (B) , \quad B\in\mathscr{B}_\alpha .
\end{equation}

Other natural objects that are suitable for defining Hardy spaces, namely
maximal functions and square functions, 
were studied in the Gaussian setting by 
J. Maas, J. van Neerven, and P. Portal. 
In \cite{MNP11} they considered 
(a version\footnote{They have $t\nabla e^{-t^2L}$ instead of
$t^2L e^{-t^2L}$.} of)
the \emph{admissible conical square function}
\begin{equation*}
  Sf(x) = \Big( \int_0^{2m(x)} \frac{1}{\gamma (B(x,t))} \int_{B(x,t)} |t^2L e^{-t^2L}f(y)|^2 
  \, d\gamma (y) \, \frac{dt}{t} \Big)^{1/2} , \quad x\in\mathbb{R}^n,
\end{equation*}
and showed that it is controlled by a non-tangential semigroup maximal
function. The converse inequality was presented in
\cite{P} along with a proof that the Riesz transform
satisfies $\| \nabla L^{-1/2} f \|_1 \lesssim \| Sf \|_1 + \| f \|_1$.
The benefit of conical objects (as opposed to vertical ones) is
the applicability of tent space theory, which in the Gaussian setting
was initiated in \cite{MNP12} and further developed by
A. Amenta and the author in \cite{AK}.

The aim of this paper is to examine the decomposition method
presented in \cite{P} 
and to see what kind of $L^1$-estimates
one can obtain for spectral multipliers $\phi(L)f$ 
in terms of the admissible conical square function $Sf$ and other 
relevant objects.
The hope is that these considerations will help in developing a
fully satisfactory theory of Gaussian Hardy spaces.

\begin{theorem}
  Let
  \begin{equation*}
    \phi(\lambda) = \int_0^\infty \Phi(t) (t\lambda)^2 e^{-t\lambda} \,
    \frac{dt}{t} , \quad \lambda\geq 0,
  \end{equation*}
  where $\Phi : (0,\infty) \to \mathbb{C}$ is twice continuously differentiable 
  and satisfies
  \begin{equation*}
    \sup_{0<t<\infty} (|\Phi(t)| + t|\Phi'(t)| + t^2|\Phi''(t)|) 
    + \int_1^\infty (|\Phi'(t)| + t|\Phi''(t)|) \, dt < \infty .
  \end{equation*}
  Then, for all $f\in L^1(\gamma)$, we have
  \begin{equation*}
    \| \phi(L)f \|_1 \lesssim \| Sf \|_1 + \| f \|_1
    + \| (1+\log_+ |\cdot |) \, Mf \|_1 ,
  \end{equation*}
  where
  \begin{equation*}
    Mf(x) = \sup_{\varepsilon m(x)^2 < t \leq 1} |e^{-tL}f(x)|, \quad x\in\mathbb{R}^n ,
  \end{equation*}
  and $\varepsilon > 0$ does not depend on $f$.
\end{theorem}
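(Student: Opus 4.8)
The plan is to build on the decomposition method of \cite{P}. Start from $\phi(L)f(x)=\int_0^\infty \Phi(t)(tL)^2e^{-tL}f(x)\,\tfrac{dt}{t}=\int_0^\infty p(t)\,\partial_t^2\big(e^{-tL}f(x)\big)\,dt$, where $p(t)=t\Phi(t)$; the hypotheses say precisely that $p'=\Phi+t\Phi'$ is bounded, that $|p''(t)|=|2\Phi'(t)+t\Phi''(t)|\lesssim t^{-1}$ on $(0,1)$, and that $\int_1^\infty|p''|<\infty$, the latter also forcing $p'$ to have a finite limit $\ell$ at infinity. By density it suffices to treat $f$ a finite linear combination of Hermite polynomials, so that everything converges and is differentiable in $t$ pointwise. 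The key move is to split the $t$-integral at the admissible scale $t=\varepsilon m(x)^2$, with $\varepsilon\in(0,1)$ a small absolute constant fixed at the end: $\phi(L)f=Af+Bf$, where $Af(x)$ is the part over $(0,\varepsilon m(x)^2)$ and $Bf(x)$ the part over $(\varepsilon m(x)^2,\infty)$. I expect $\|Af\|_1\lesssim\|Sf\|_1$ and $\|Bf\|_1\lesssim\|f\|_1+\|(1+\log_+|\cdot|)Mf\|_1$, up to one residual term of $S$-type.

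For the non-local part $Bf$ I would integrate by parts twice in $t$. The endpoint $t=0$ is never reached; at $t=\infty$ the term $p(t)\partial_t\big(e^{-tL}f(x)\big)=-t\Phi(t)Le^{-tL}f(x)$ vanishes because $Le^{-tL}$ decays exponentially on the orthogonal complement of the constants (spectral gap of $L$), while $p'(t)e^{-tL}f(x)\to\ell\int f\,d\gamma$, a constant function of $L^1(\gamma)$-norm $\le|\ell|\,\|f\|_1$. What remains is: a boundary term $\varepsilon m(x)^2\Phi(\varepsilon m(x)^2)Le^{-\varepsilon m(x)^2L}f(x)=\Phi(\varepsilon m(x)^2)\,Q_{\sqrt\varepsilon m(x)}f(x)$ (with $Q_s=s^2Le^{-s^2L}$, the operator occurring in the definition of $S$); a boundary term $p'(\varepsilon m(x)^2)e^{-\varepsilon m(x)^2L}f(x)$, which is $\lesssim Mf(x)$ since $e^{-\varepsilon m(x)^2L}f(x)$ is a limit of $e^{-tL}f(x)$ as $t\downarrow\varepsilon m(x)^2$; and the integral $\int_{\varepsilon m(x)^2}^\infty p''(t)e^{-tL}f(x)\,dt$. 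In the last integral I split at $t=1$: on $(\varepsilon m(x)^2,1)$ one has $|e^{-tL}f(x)|\le Mf(x)$ (exactly the range defining $M$) and $\int_{\varepsilon m(x)^2}^1|p''(t)|\,dt\lesssim\int_{\varepsilon m(x)^2}^1\tfrac{dt}{t}=\log\tfrac1\varepsilon+2\log\tfrac1{m(x)}\lesssim 1+\log_+|x|$, which is where the logarithmic weight enters; on $(1,\infty)$ one uses $\|e^{-tL}f\|_1\le\|f\|_1$ and $\int_1^\infty|p''|<\infty$. The point of integrating by parts twice is that the $L^1$-unbounded operators $Le^{-tL}$ and $(tL)^2e^{-tL}$ never occur under a $t$-integral — only the contractive semigroup $e^{-tL}$ does.

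For the local part $Af$ I would deliberately not integrate by parts, so as to keep both powers of $L$: with $(tL)^2e^{-tL}=4Q_{\sqrt{t/2}}^2$ and $t=2u^2$ one gets $Af(x)=8\int_0^{\delta m(x)}\Phi(2u^2)\,Q_u^2f(x)\,\tfrac{du}{u}$, $\delta=\sqrt{\varepsilon/2}$. To estimate $\|Af\|_1$ I pair with $g\in L^\infty(\gamma)$, $\|g\|_\infty\le1$; writing $Q_u^2=Q_u\circ Q_u$, moving one self-adjoint factor onto $g$, and using Fubini turns $\langle Af,g\rangle$ into an integral over $(y,u)$ of $\Phi(2u^2)\,Q_uf(y)\cdot Q_u(\mathbf 1_{\{\delta m>u\}}g)(y)$. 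The Mehler-kernel off-diagonal estimates for $Q_u$ on admissible scales localise this, up to rapidly decaying tails summed by means of \eqref{doubling}, to the region $u\lesssim m(y)$, i.e.\ the admissible cone in which $Sf$ controls $Q_uf$; then $T^1$--$T^\infty$ duality for the Gaussian admissible tent spaces of \cite{MNP12,AK} finishes it, since $Q_{(\cdot)}f$ restricted to the admissible region has $T^1$-norm comparable to $\|Sf\|_1$ while $Q_{(\cdot)}(\mathbf 1 g)$ has $T^\infty$-norm $\lesssim\|g\|_\infty$ by the standard (locally Euclidean) Carleson estimate for $Q_u$ on $L^\infty$. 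Choosing $\varepsilon$ small is what makes the Gaussian decay of the Mehler kernel dominate the doubling growth $e^{C\lambda^2}$ in \eqref{doubling}, which is why $\varepsilon$ can be taken absolute. This local estimate is the technical heart, and the step I expect to be the main obstacle.

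Finally one must absorb the residual term $\Phi(\varepsilon m(\cdot)^2)\,Q_{\sqrt\varepsilon m(\cdot)}f$ from $Bf$: a single value of $Q_sf$ at an admissible scale $s\simeq m(x)$ is dominated pointwise by $Sf(x)$ — e.g.\ by a Whitney-box average inside the cone, the $s$-variation being again of $Q^2$-type — so its $L^1$-norm is $\lesssim\|Sf\|_1$. Summing the bounds for $Af$ and $Bf$ then yields $\|\phi(L)f\|_1\lesssim\|Sf\|_1+\|f\|_1+\|(1+\log_+|\cdot|)Mf\|_1$.
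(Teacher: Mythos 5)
Your overall architecture matches the paper's: split the $t$-integral at the admissible scale $t\simeq m(x)^2$ (in the paper, at $\widetilde m(x)/\kappa$ for the variable of $(t^2L)^2e^{-t^2L}$), integrate by parts twice in the far region to produce the $\|f\|_1$ and $\|(1+\log_+|\cdot|)\,Mf\|_1$ terms, and run a tent-space argument in the near region to produce $\|Sf\|_1$. Your treatment of the near part by $T^1$--$T^\infty$ duality, instead of the paper's atomic decomposition of $\mathfrak{t}^1(\gamma)$ together with the explicit correction term $\pi_2$ for the non-admissible portion of the inner factor, is a legitimate alternative in outline; but be aware that the ``rapidly decaying tails'' you wave at are precisely where the paper spends Lemma \ref{od}, Lemma \ref{kernelest} and Proposition \ref{pi2}. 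On the non-admissible region $D^c$ the factor $Q_uf(y)=u^2Le^{-u^2L}f(y)$ is controlled only by the pointwise Mehler bound, which carries a growth factor $e^{(|x|^2+|y|^2)/2}$, and beating that growth requires the exact interplay of scales ($u\lesssim \widetilde m(x)/\kappa$ for the outer variable versus $u\geq\widetilde m(y)$ for the inner one) that the paper engineers; also the Carleson ($T^\infty$) estimate for $Q_u$ on $L^\infty(\gamma)$ over admissible balls is not in the cited literature and would need a proof. So this part is a sketch of comparable difficulty, not a shortcut.

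The genuine gap is the boundary term $\Phi(\varepsilon m(x)^2)\,Q_{\sqrt{\varepsilon}\,m(x)}f(x)$ produced by your first integration by parts. You assert that a single value of $Q_sf(x)$ at an admissible scale $s\simeq m(x)$ is dominated pointwise by $Sf(x)$. It is not: $Sf(x)$ is an $L^2(d\gamma\,dt/t)$ average over a cone, and a pointwise value of $Q_sf$ cannot be bounded by such an average without a telescoping/averaging argument whose error terms (involving $\partial_u Q_uf$ and spatial increments of the Mehler kernel) are themselves unbounded operators of the same type; at best such an argument yields a square function built from $(u^2L)^2e^{-u^2L}$ and a gradient term, not $Sf$, so the estimate does not close as stated. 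The paper handles this term entirely differently, and this is where a substantial part of its work lies: the time-dilation comparison $M_{t/\alpha}(x,y)\lesssim e^{\alpha t\min(|x|^2,|y|^2)}M_t(x,y)$ of Lemma \ref{timedilation} (resting on \cite[Lemma 3.4]{P}), combined with the annular kernel estimates of Lemma \ref{admissibleL1}, gives $\|(tLe^{-tL}f)|_{t=\widetilde m(\cdot)^2/\alpha}\|_1\lesssim\|f\|_1$ for $\alpha$ large, so the boundary term is absorbed into $\|f\|_1$ rather than $\|Sf\|_1$. Without these two lemmas (or a genuine substitute) your bound for $Bf$ is incomplete; the same lemma also covers the zeroth-order boundary term, although your appeal to $Mf$ there is acceptable.
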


\begin{remark}
Several remarks are in order:
\begin{enumerate}
  \item
  The term $\| (1+\log_+ |\cdot |) \, Mf \|_1$ is highly undesirable
  for two reasons. Firstly, the maximal operator $M$ is of
  a non-admissible kind in the sense that it is not restricted to times $t\lesssim m(\cdot)$.
  Secondly, the weight factor $(1+\log_+ |\cdot |)$, which arises
  from the admissibility function $m$, seems problematic.
  However, it is difficult to see how the appearance of the term
  could be avoided.
  Notice, nevertheless, that $\| (1+\log_+ |\cdot |) \, Mf \|_1$
  is finite at least if $f\in L^p(\gamma)$ with $1 < p < \infty$.

  \item
  The operators in the theorem above are special kind 
  of Laplace type multipliers;
  \begin{equation*}
    \phi(\lambda) = \int_0^\infty \Phi(t) (t\lambda)^2 e^{-t\lambda} \,
    \frac{dt}{t}
    = \lambda \int_0^\infty (\Phi(t) + t\Phi'(t)) e^{-t\lambda} \, dt ,
    \quad \lambda\geq 0,
  \end{equation*}
  and therefore bounded on $L^p(\gamma)$ when $1 < p < \infty$.
  Note that if, in addition, we had
  \begin{equation*}
    \int_0^1 (|\Phi'(t)| + t|\Phi''(t)|) \, dt < \infty ,
  \end{equation*}
  then $\phi(L)$ would be bounded even on $L^1(\gamma)$. Indeed,
  using integration by parts we have
  \begin{equation*}
    \phi(L)f = -\Phi(0)f + \int_0^\infty (2\Phi'(t) + t\Phi''(t))
    e^{-tL}f \, dt
  \end{equation*}
  so that $\| e^{-tL}f \|_1 \leq \| f \|_1$ implies
  \begin{equation*}
    \| \phi(L)f \|_1 \lesssim \Big( |\Phi(0)| + \int_0^\infty
    (|\Phi'(t)| + t|\Phi''(t)|) \, dt \Big) \| f \|_1 .
  \end{equation*}
  
  \item
  An example of a multiplier satisfying the conditions of the theorem is
  the localized imaginary power arising from
  $\Phi(t) = t^{i\tau} \chi(t)$, where $\tau\in\mathbb{R}$ and 
  $\chi$ is a smooth cutoff with,
  say, $1_{(0,1]} \leq \chi \leq 1_{(0,2]}$. Observe that for
  $0 < t \leq 1$ we have
  $|\Phi'(t)| \eqsim t^{-1}$ and $|\Phi''(t)| \eqsim t^{-2}$ so that
  \begin{equation*}
    \int_0^1 (|\Phi'(t)| + t|\Phi''(t)|) \, dt = \infty .
  \end{equation*}
\end{enumerate}
\end{remark}

\begin{acks}
  The research has been supported by the Academy of Finland via the Centre of Excellence in Analysis and Dynamics Research (project No. 271983). The author wishes to thank Alex Amenta and Jonas Teuwen for enlightening discussions.
\end{acks}

\section{Proof of the theorem}

\subsection*{Strategy}
The proof of the theorem follows the decomposition
method from \cite{P}.
Let us begin by introducing a
discretized version of the admissibility function
\begin{equation*}
  \widetilde{m}(x) =
  \begin{cases}
    1, &|x|<1, \\
    2^{-k}, &2^{k-1} \leq |x| < 2^k , \quad k\geq 1,
  \end{cases}
\end{equation*}
and write $\widetilde{\mathscr{B}}_\alpha$ for the associated family of
admissible balls. From 
$\widetilde{m} \leq m \leq 2\widetilde{m}$ it follows that
$\widetilde{\mathscr{B}}_\alpha \subset \mathscr{B}_\alpha \subset \widetilde{\mathscr{B}}_{2\alpha}$. This discretization is relevant for Proposition
\ref{pi3}.

We define the \emph{Gaussian tent space} adapted to this new admissibility function as the space
$\mathfrak{t}^1(\gamma)$ of functions $u$ on the \emph{admissible region} 
$D = \{ (y,t)\in \mathbb{R}^n \times (0,\infty) : 0 < t < \widetilde{m}(y) \}$ 
for which
\begin{equation*}
    \| u \|_{\mathfrak{t}^1(\gamma)} = \int_{\mathbb{R}^n} \Big( 
    \iint_{\Gamma (x)} |u(y,t)|^2 \, \frac{d\gamma (y) \, dt}{t\gamma (B(y,t))} 
    \Big)^{1/2}
    d\gamma (x) < \infty .
\end{equation*}
Here $\Gamma (x) = \{ (y,t)\in D : |y-x| < t \}$ is an admissible 
cone at $x\in \mathbb{R}^n$.

The main theorem of \cite{AK} guarantees that 
every $u\in\mathfrak{t}^1(\gamma)$ admits a decomposition into `atoms' $a_k$
so that
\begin{equation*}
  u = \sum_k \lambda_k a_k , \quad \textup{with} \quad 
  \sum_k |\lambda_k| \eqsim \| u \|_{\mathfrak{t}^1(\gamma)} .
\end{equation*}  
Recall that \emph{atom} is a function $a$ on $D$ associated with a ball 
$B\in\widetilde{\mathscr{B}}_5$ for which $\textup{supp}\, a \subset B\times (0,r_B)$ and
\begin{equation*}
  \Big( \int_0^{r_B} \| a(\cdot , t) \|_2^2 \, \frac{dt}{t} \Big)^{1/2}
  \leq \gamma (B)^{-1/2} .
\end{equation*}
For such a function, $\| a \|_{\mathfrak{t}^1(\gamma)} \lesssim 1$.
 
Let then $\phi$ and $\Phi$ be as in Theorem and let $f$ be a polynomial.
For any $\delta, \delta' > 0$ and $\kappa \geq 1$ we can decompose $\phi(L)f$ into three parts as follows:
\begin{align*}
  \phi(L)f &= c_{\delta,\delta'} \int_0^\infty \Phi((\delta'+\delta)t^2) (t^2L)^2 
               e^{-(\delta'+\delta)t^2L} f \, \frac{dt}{t} \\
    &= c_{\delta,\delta'} \Big( \int_0^{\widetilde{m}(\cdot)/\kappa} \widetilde{\Phi}(t^2) t^2L
               e^{-\delta' t^2L} u(\cdot , t) \, \frac{dt}{t} \\
    &\quad\quad + \int_0^{\widetilde{m}(\cdot)/\kappa} \widetilde{\Phi}(t^2) t^2L
               e^{-\delta' t^2L} (1_{D^c}(\cdot , t) t^2L e^{-\delta t^2L}f) \, 
               \frac{dt}{t} \\
    &\quad\quad + \int_{\widetilde{m}(\cdot)/\kappa}^\infty \widetilde{\Phi}(t^2) (t^2L)^2 
               e^{-(\delta'+\delta)t^2L}f \, \frac{dt}{t} \Big) \\
    &=: c_{\delta,\delta'} ( \pi_1 u + \pi_2 f + \pi_3 f ),
\end{align*}
where $u(\cdot , t) = 1_D(\cdot , t) t^2L e^{-\delta t^2L}f$
and $\widetilde{\Phi} (t) = \Phi ((\delta'+\delta)t)$.

Now
\begin{equation*}
  \| \phi(L)f \|_1 \leq |c_{\delta,\delta'} | (\| \pi_1 u \|_1 + \| \pi_2 f \|_1
  + \| \pi_3 f \|_1) ,
\end{equation*}
and the proof consists of estimating these three terms separately
for sufficiently small $\delta > \delta' > 0$ and large enough 
$\kappa \geq 1$.

\subsection*{Analysis of the three parts}

Proposition \ref{pi1} deals with
\begin{equation*}
  \pi_1 u = \int_0^{\widetilde{m}(\cdot)/\kappa} \widetilde{\Phi}(t^2) t^2L
               e^{-\delta' t^2L} u(\cdot , t) \, \frac{dt}{t}
\end{equation*}
and relies on the following $L^2$-$L^2$ -off diagonal estimate
(cf. \cite[Proposition 4.2]{P} and \cite{NP}).

\begin{lemma}
\label{od}
  There exists a constant $c_{od} > 0$ such that for $j=0,1$ we have
  \begin{equation*}
    \| 1_{E'} (tL)^j e^{-tL} 1_E \|_{2\to 2}
    \lesssim \exp \Big( -\frac{d(E,E')^2}{c_{od}t} \Big) , \quad t>0,
  \end{equation*}
  whenever $E,E'\subset\mathbb{R}^n$.
\end{lemma}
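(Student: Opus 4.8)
\emph{Approach.} The plan is to prove, by Davies' exponential perturbation method, a single \emph{complex-time} weighted bound from which both cases $j=0,1$ follow by elementary manipulations. Since $L$ is self-adjoint and non-negative on $L^2(\gamma)$, the operators $e^{-zL}$ form a holomorphic contraction semigroup on the right half-plane $\{\operatorname{Re} z > 0\}$ (indeed $\|e^{-zL}\|_{2\to 2}\leq 1$ whenever $\operatorname{Re} z\geq 0$). Hence, for a bounded $1$-Lipschitz function $\psi$ on $\mathbb{R}^n$ and $\rho\geq 0$, the twisted family $T_\rho(z) = e^{\rho\psi}e^{-zL}e^{-\rho\psi}$ is a well-defined holomorphic family of bounded operators obeying the semigroup law $T_\rho(z)T_\rho(w)=T_\rho(z+w)$. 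I claim that
\begin{equation*}
  \| T_\rho(z) \|_{2\to 2} \leq \exp\Big( \frac{\rho^2 |z|^2}{2\operatorname{Re} z} \Big) , \qquad \operatorname{Re} z > 0 .
\end{equation*}

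\emph{Proof of the weighted bound.} Fix $z = te^{i\beta}$ with $|\beta|<\pi/2$, let $v$ be a finite linear combination of Hermite polynomials, and write $v_r = T_\rho(re^{i\beta})v$. Differentiating along the ray and moving the (bounded, self-adjoint) factor $e^{\rho\psi}$ across the inner product gives
\begin{equation*}
  \frac{d}{dr}\| v_r \|_2^2 = -2\operatorname{Re}\Big( e^{i\beta}\, \langle L(e^{-\rho\psi}v_r),\, e^{\rho\psi}v_r\rangle \Big) ,
\end{equation*}
and expanding $\nabla(e^{\mp\rho\psi}v_r)$ in the Dirichlet form shows that the inner product has real part $\tfrac12\int(|\nabla v_r|^2 - \rho^2|v_r|^2|\nabla\psi|^2)\,d\gamma$ and imaginary part bounded in modulus by $\rho\|v_r\|_2\|\nabla v_r\|_2$. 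Since $|\nabla\psi|\leq 1$, an arithmetic--geometric estimate (with weight $\cos\beta$) makes the gradient terms cancel and yields $\frac{d}{dr}\|v_r\|_2^2 \leq (\rho^2/\cos\beta)\,\|v_r\|_2^2$; integrating from $0$ to $t$ and using $t/\cos\beta = |z|^2/\operatorname{Re} z$ gives the claim, first for such $v$ and then, by density, on all of $L^2(\gamma)$. Now choose $\psi(x) = \min(d(x,E),N)$ with $N\geq d(E,E')$, so that $\psi\equiv 0$ on $E$ and $\psi\geq d(E,E')$ on $E'$. Factoring $1_{E'}e^{-zL}1_E = (1_{E'}e^{-\rho\psi})\,T_\rho(z)\,1_E$, using $\|1_{E'}e^{-\rho\psi}\|_{2\to 2}\leq e^{-\rho\,d(E,E')}$, and optimizing in $\rho$ (minimizing $-\rho\,d(E,E') + \rho^2|z|^2/(2\operatorname{Re} z)$) produce
\begin{equation*}
  \| 1_{E'}e^{-zL}1_E \|_{2\to 2} \leq \exp\Big( -\frac{d(E,E')^2\,\operatorname{Re} z}{2|z|^2} \Big) , \qquad \operatorname{Re} z > 0 .
\end{equation*}

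\emph{Conclusion.} The case $j=0$ is the specialization $z=t$. For $j=1$, holomorphy of $z\mapsto e^{-zL}$ on the right half-plane and Cauchy's formula for the derivative give
\begin{equation*}
  (tL)e^{-tL} = -t\,\frac{d}{dz}e^{-zL}\Big|_{z=t} = -\frac{t}{2\pi i}\oint_{|w-t|=t/2}\frac{e^{-wL}}{(w-t)^2}\,dw ,
\end{equation*}
and on the circle $|w-t|=t/2$ one has $\operatorname{Re} w\geq t/2$ and $|w|\leq 3t/2$, hence $\operatorname{Re} w/|w|^2\geq 2/(9t)$; thus the previous display bounds $\| 1_{E'}e^{-wL}1_E \|_{2\to 2}$ there by $\exp(-d(E,E')^2/(9t))$, and estimating the contour integral gives $\| 1_{E'}(tL)e^{-tL}1_E \|_{2\to 2}\leq 2\exp(-d(E,E')^2/(9t))$. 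This is the assertion, with $c_{od}=9$.

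\emph{Main obstacle.} The one genuinely delicate point is the a priori unbounded exponential weight $e^{\rho\, d(\cdot,E)}$; it is circumvented by carrying the bounded truncation $\min(d(\cdot,E),N)$ through the whole argument and observing that the resulting estimates do not depend on $N$, so a single choice $N\geq d(E,E')$ suffices and no limiting procedure is needed. Everything else --- differentiability of $r\mapsto\|v_r\|_2^2$ on $(0,\infty)$, the form-domain bookkeeping behind the differential identity, and the passage from Hermite polynomials to $L^2(\gamma)$ --- is routine given the holomorphy and contractivity of $e^{-zL}$ on the right half-plane.
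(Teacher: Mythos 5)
Your argument is correct in substance, and it is worth noting that the paper does not prove Lemma \ref{od} at all: it simply cites \cite[Proposition 4.2]{P} and \cite{NP}, where essentially the same Davies--Gaffney perturbation scheme is carried out. So your proposal supplies a self-contained proof along the lines of the cited references: twist the holomorphic contraction semigroup by a bounded Lipschitz weight, derive a Gronwall-type differential inequality from the Dirichlet form, optimize in $\rho$, and recover the $j=1$ case by Cauchy's integral formula on a circle staying in the right half-plane. The truncation $\psi=\min(d(\cdot,E),N)$ and the observation that $e^{\rho\psi}1_E=1_E$ are exactly the right devices, and the Cauchy-formula step is clean since the complex-time bound degrades only through $\operatorname{Re}w/|w|^2$, which is bounded below on the circle $|w-t|=t/2$.

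One small quantitative slip: in the differential inequality you drop the contribution of the term $\rho^2\cos\beta\int|v_r|^2|\nabla\psi|^2\,d\gamma$ coming from $-2\cos\beta\cdot\operatorname{Re}\langle L(e^{-\rho\psi}v_r),e^{\rho\psi}v_r\rangle$. Carrying it along, the weighted arithmetic--geometric estimate gives
\begin{equation*}
  \frac{d}{dr}\|v_r\|_2^2 \leq \rho^2\Big(\cos\beta+\frac{1}{\cos\beta}\Big)\|v_r\|_2^2 \leq \frac{2\rho^2}{\cos\beta}\|v_r\|_2^2 ,
\end{equation*}
not $(\rho^2/\cos\beta)\|v_r\|_2^2$; after optimizing in $\rho$ the exponent becomes $-d(E,E')^2\operatorname{Re}z/(4|z|^2)$ rather than $-d(E,E')^2\operatorname{Re}z/(2|z|^2)$, and the final constant is $c_{od}=18$ rather than $9$. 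Since the lemma only asserts the existence of some $c_{od}>0$, this affects nothing. The remaining domain issues are indeed routine: $e^{-\rho\psi}v_r=e^{-re^{i\beta}L}(e^{-\rho\psi}v)$ lies in $D(L)$ by analyticity of the semigroup, and $e^{\rho\psi}v_r$ lies in the form domain because $e^{\rho\psi}$ is bounded Lipschitz, so the identity $\langle Lu,w\rangle=\tfrac12\int\nabla u\cdot\overline{\nabla w}\,d\gamma$ applies as you use it.
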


\begin{prop}
\label{pi1}
  Let $\kappa \geq 1$ and $0 < \delta \leq 1$. For sufficiently small
  $\delta' > 0$
  we have $\| \pi_1 u \|_1 \lesssim \| u \|_{\mathfrak{t}^1(\gamma)}$.
  Moreover, the function $u(\cdot , t) = 1_D (\cdot , t) t^2L e^{-\delta t^2L}f$ satisfies $\| u \|_{\mathfrak{t}^1(\gamma)} \lesssim \| Sf \|_1$.
\end{prop}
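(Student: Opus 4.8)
The plan is to prove Proposition~\ref{pi1} in two steps matching its two assertions: first, a \emph{boundedness} statement $\|\pi_1 u\|_1 \lesssim \|u\|_{\mathfrak{t}^1(\gamma)}$ for the operator $\pi_1$ acting on the tent space, and second, the \emph{square function bound} $\|u\|_{\mathfrak{t}^1(\gamma)} \lesssim \|Sf\|_1$ for the particular $u$ arising in the decomposition.

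For the first step I would invoke the atomic decomposition from \cite{AK} quoted above: writing $u = \sum_k \lambda_k a_k$ with $\sum_k|\lambda_k| \lesssim \|u\|_{\mathfrak{t}^1(\gamma)}$, it suffices by linearity and the triangle inequality to show $\|\pi_1 a\|_1 \lesssim 1$ uniformly over all atoms $a$. Fix an atom $a$ associated to an admissible ball $B = B(x_B, r_B) \in \widetilde{\mathscr{B}}_5$, so that $\operatorname{supp} a \subset B \times (0, r_B)$ and $\big(\int_0^{r_B}\|a(\cdot,t)\|_2^2\,dt/t\big)^{1/2} \leq \gamma(B)^{-1/2}$. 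The key is to estimate $\|\pi_1 a\|_{L^1(A_i)}$ on the annuli $A_i = \{y : 2^i r_B \leq |y - x_B| < 2^{i+1} r_B\}$ (with $A_0$ a suitable enlargement of $B$) and sum over $i \geq 0$. On each annulus one uses Cauchy--Schwarz against $\gamma(2^{i+1}B)^{1/2}$, so the doubling bound \eqref{doubling} for admissible balls contributes a factor $e^{C 4^i \alpha^2}$, which must be beaten by the off-diagonal decay. Here Lemma~\ref{od} enters: since $a(\cdot,t)$ is supported in $B$, the operator $1_{A_i}(tL)^j e^{-\delta' tL}$ applied to it (with $j = 1$, after absorbing $t^2 L e^{-\delta' t^2 L}$ into this form via $t \mapsto t^2$) produces a factor $\exp(-d(B,A_i)^2/(c_{od}\delta' t^2)) \leq \exp(-c 4^i r_B^2/(\delta' t^2))$; since the integration is restricted to $t \leq \widetilde m(\cdot)/\kappa \lesssim r_B$ (as $B$ is admissible and $y$ ranges over a controlled dilate), one gets $r_B^2/t^2 \gtrsim 1$ and hence decay like $\exp(-c 4^i/\delta')$. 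Choosing $\delta'$ small enough that $c/\delta' > C\alpha^2$ makes the annular sum converge; the boundedness $L^2 \to L^1$ on the final annulus is handled by the $L^2$-normalization of the atom together with $\int \widetilde\Phi$-type bounds coming from $\sup(|\Phi| + t|\Phi'|) < \infty$. Some care is needed because the upper limit $\widetilde m(\cdot)/\kappa$ of the $t$-integral depends on the \emph{output} point, not on $B$; this is where the discretized $\widetilde m$ and the relation $\widetilde m(y) \eqsim r_B$ on a fixed dilate of $B$ are used, and where choosing $\kappa$ large helps ensure the cone/region geometry cooperates.

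For the second step, the identity $u(\cdot,t) = 1_D(\cdot,t)\, t^2 L e^{-\delta t^2 L} f$ together with the definition of $\|\cdot\|_{\mathfrak{t}^1(\gamma)}$ gives, after Fubini and the standard cone--averaging computation,
\begin{equation*}
  \|u\|_{\mathfrak{t}^1(\gamma)} = \int_{\mathbb{R}^n} \Big( \iint_{\Gamma(x)} |t^2 L e^{-\delta t^2 L} f(y)|^2 \, \frac{d\gamma(y)\,dt}{t\,\gamma(B(y,t))} \Big)^{1/2} d\gamma(x),
\end{equation*}
and one recognizes the inner expression as (a rescaling of) the admissible conical square function $Sf$, up to the difference between integrating over the admissible cone $\Gamma(x)$ (with $(y,t) \in D$, i.e.\ $t < \widetilde m(y)$, $|y-x|<t$) and over the defining region of $Sf$ (with $t < 2m(x)$, $|y-x|<t$). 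One compares the two: if $|y-x| < t$ and $t < \widetilde m(y)$ then $|x| \leq |y| + t \leq |y| + \widetilde m(y) \lesssim 1 + |y|$, so $m(x) \gtrsim m(y) \gtrsim t$ up to constants, meaning $\Gamma(x)$ is contained in the region defining $Sf$ after adjusting $\alpha$; conversely a routine change of the aperture constant handles the reverse inclusion needed to dominate by $Sf$. The scaling $t^2 \to s$ turns $\delta t^2 L$ into $\delta s L$ and one absorbs the fixed $\delta \leq 1$ into the square-function constants using the semigroup property and the $L^2$ off-diagonal bounds again (a fixed rescaling of the aperture). Pointwise in $x$ this yields the inner integral $\lesssim (Sf(x))^2$ possibly after enlarging $\alpha$, and integrating in $d\gamma(x)$ gives $\|u\|_{\mathfrak{t}^1(\gamma)} \lesssim \|Sf\|_1$.

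The main obstacle I expect is the bookkeeping in the first step around the point-dependent cutoff $\widetilde m(\cdot)/\kappa$: one must simultaneously (i) exploit off-diagonal decay that lives on the scale $d(B,A_i)^2/t^2$ and (ii) control the non-doubling blow-up $e^{C4^i\alpha^2}$ from \eqref{doubling}, and these are only compatible because admissibility forces $t \lesssim r_B$ on the relevant region. Getting the quantifiers in the right order — first fix $\kappa$ large and $\delta \leq 1$, then choose $\delta'$ small depending on $\alpha$ and $c_{od}$ — is the delicate part, and the discretization $\widetilde m$ (rather than $m$) is what makes the geometry of which balls $B$ and which output annuli $A_i$ interact tractable. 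The $t$-integrability near $t = 0$, by contrast, is harmless: the factor $(t^2\lambda)^2 e^{-\delta' t^2\lambda}$ vanishes to second order, and $\widetilde\Phi$ is bounded, so no cancellation beyond the Gaussian decay is needed there.
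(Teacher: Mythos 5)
Your proposal follows essentially the same route as the paper's proof: atomic decomposition, annular Cauchy--Schwarz against $\gamma(2^{k+1}B)^{1/2}$, the doubling bound \eqref{doubling} beaten by the off-diagonal decay of Lemma \ref{od} for $\delta'$ small (using $t\lesssim r_B$ on the support of the atom), and, for the second claim, the change of variables $s=\sqrt{\delta}\,t$ followed by the Gaussian change-of-aperture result of \cite{AK} and the cone comparison $|y-x|<s<m(y)\Rightarrow s<2m(x)$. The only harmless deviations are that $\kappa\geq 1$ need not be taken large for this proposition, and that the near-diagonal annulus is handled in the paper by the spectral quadratic estimate $\sup_{\lambda\geq 0}\int_0^\infty (t^2\lambda)^2 e^{-2\delta' t^2\lambda}\,\frac{dt}{t}<\infty$, which is the precise form of your closing remark about $t$-integrability near $t=0$.
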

\begin{proof}
  By the atomic decomposition, 
  it suffices to show that $\| \pi_1 a \|_1 \lesssim 1$ for any 
  atom $a$ associated with a ball $B\in\widetilde{\mathscr{B}}_5$.
  Let us 
  consider the annuli $C_k(B) = 2^{k+1}B \setminus 2^k B$ for
  $k\geq 1$, and $C_0(B) = 2B$.
  By H\"older's inequality we have
  \begin{equation}
  \label{holder}
  \begin{split}
    \| \pi_1 a \|_1 &=
    \Big\| \int_0^{\widetilde{m}(\cdot)/\kappa} \widetilde{\Phi}(t^2) t^2L e^{-\delta' t^2L} a(\cdot , t) \, \frac{dt}{t} \Big\|_1 \\
    &\leq \sum_{k=0}^\infty \gamma (2^{k+1}B)^{1/2}
    \Big\| 1_{C_k(B)} \int_0^{r_B\wedge 2^{-k-1}/\kappa} \widetilde{\Phi} (t^2) t^2L e^{-\delta' t^2L} a(\cdot , t) \, \frac{dt}{t} \Big\|_2 .
    \end{split}
  \end{equation}
  
  We estimate the norms on the right hand side of \eqref{holder}
  by pairing with a $g\in L^2(\gamma)$  and relying on the assumption
  that $\Phi$ is bounded:
  \begin{equation*}
    \begin{split}
    &\Big| \int_{\mathbb{R}^n} \int_0^{r_B\wedge 2^{-k-1}/\kappa} \widetilde{\Phi} (t^2) t^2Le^{-\delta' t^2L} a(\cdot , t) \, \frac{dt}{t} \, g \, d\gamma \Big| \\
    &= \Big| \int_0^{r_B\wedge 2^{-k-1}/\kappa} \int_B a(\cdot , t) \widetilde{\Phi} (t^2) t^2Le^{-\delta' t^2L} g \, d\gamma \, \frac{dt}{t} \Big| \\
    &\lesssim \Big( \int_0^{r_B} \| a(\cdot , t) \|_2^2 \, \frac{dt}{t} \Big)^{1/2} \Big( \int_0^{r_B} \| 1_B t^2L e^{-\delta' t^2L} g \|_2^2 \, \frac{dt}{t}
    \Big)^{1/2} .
    \end{split}
  \end{equation*}
  
  Now, for $g$ supported in $C_0(B) = 2B$ we have
  \begin{equation*}
  \begin{split}
    \Big( \int_0^{r_B} \| 1_B t^2L e^{-\delta' t^2L}g \|_2^2 \, \frac{dt}{t} \Big)^{1/2}
    &= \Big( \sum_{\beta\in\mathbb{N}^n} |\langle g , h_\beta \rangle |^2
    \int_0^{r_B} (t^2|\beta|)^2 e^{-2\delta' t^2|\beta|} \, \frac{dt}{t} \Big)^{1/2} \\
    &\lesssim \| g \|_2 .
  \end{split}
  \end{equation*}
  
  When $k\geq 1$ we have $d(C_k(B),B) \geq (2^k - 1)r_B \geq 2^{k-1} r_B$
  and so, by Lemma \ref{od}, it follows that for $0 < t \leq r_B$,
  \begin{equation*}
    \| 1_B t^2L e^{-\delta' t^2L} 1_{C_k(B)} \|_{2\to 2} 
    \lesssim \exp \Big( -\frac{4^{k-1}r_B^2}{c_{od}\delta' t^2} \Big) 
    \lesssim \exp \Big( -\frac{4^{k-2}}{c_{od}\delta'} \Big) \Big(\frac{t}{r_B}\Big)^{1/2} .
  \end{equation*}
  Hence, for $g$ supported in $C_k(B)$, $k\geq 1$, we have
  \begin{equation*}
    \Big( \int_0^{r_B} \| 1_B t^2L e^{-\delta' t^2L} g \|_2^2 \, \frac{dt}{t} \Big)^{1/2}
    \lesssim \exp \Big( -\frac{4^{k-2}}{c_{od}\delta'} \Big) \| g \|_2 .
  \end{equation*}
  
  We have therefore shown that, for $k\geq 0$,
  \begin{equation*}
    \Big\| 1_{C_k(B)} \int_0^{r_B} \widetilde{\Phi} (t^2) t^2L e^{-\delta' t^2L} a(\cdot , t) \, \frac{dt}{t} \Big\|_2
    \lesssim \exp \Big( -\frac{4^{k-2}}{c_{od}\delta'} \Big) \gamma (B)^{-1/2} .
  \end{equation*}  
  According to the doubling inequality \eqref{doubling}, we have
  $\gamma (2^{k+1}B)^{1/2} \lesssim e^{2\cdot 4^{k+1} \cdot 25} \gamma (B)^{1/2}$ and therefore
  \begin{equation*}
    \| \pi_1 a \|_1 \lesssim \sum_{k=0}^\infty \gamma (2^{k+1}B)^{1/2}
    \exp \Big( -\frac{4^{k-2}}{c_{od}\delta'} \Big) \gamma (B)^{-1/2}
    \lesssim \sum_{k=0}^\infty \exp \Big( 50\cdot 4^{k+1} - \frac{4^{k-2}}{c_{od}\delta'} \Big) \lesssim 1 
  \end{equation*}
  as soon as $\delta' < 1/(3200c_{od})$. This proves the first claim.
  
For the second claim, let 
$u(\cdot , t) = 1_D (\cdot , t) t^2Le^{-\delta t^2L} f$. We perform a change of variable, 
$\delta t^2 = s^2$, i.e. $t = s / \sqrt{\delta}$ so that
    \begin{align*}
      (y,t)\in\Gamma (x) &\Leftrightarrow |y-x| < t < \widetilde{m}(y) \\
      &\Leftrightarrow |y-x| < s / \sqrt{\delta} < \widetilde{m}(y) \\
      &\Leftrightarrow (y,s) \in \Gamma'_{1/\sqrt{\delta}}(x) 
      := \{ (y,s)\in D' : |y-x| < s/\sqrt{\delta} \} ,
    \end{align*}
    where $D' := \{ (y,s) \in \mathbb{R}^n \times (0,\infty ) : s < \sqrt{\delta} \widetilde{m}(y) \}$.
    Now, change of aperture in the Gaussian tent space on $D'$ (see \cite[Corollary 3.5]{AK}) 
    guarantees that
    \begin{align*}
      \| u \|_{\mathfrak{t}^1(\gamma)} &= \int_{\mathbb{R}^n} \Big( \iint_{\Gamma (x)} |t^2L e^{-\delta t^2 L}f(y)|^2 
      \,\frac{d\gamma (y)\, dt}{t\gamma (B(y,t))} \Big)^{1/2} d\gamma (x) \\
      &= \int_{\mathbb{R}^n} \Big( \iint_{\Gamma'_{1/\sqrt{\delta}} (x)} 
      |\delta^{-1} s^2 L e^{-s^2L}f(y)|^2 
      \,\frac{d\gamma (y)\, ds}{s \gamma (B(y,s/\sqrt{\delta}))} \Big)^{1/2} d\gamma (x) \\
      &\lesssim \int_{\mathbb{R}^n} \Big( \iint_{\Gamma'(x)} 
      |s^2L e^{-s^2L}f(y)|^2 
      \,\frac{d\gamma (y)\, ds}{s \gamma (B(y,s))} \Big)^{1/2} d\gamma (x).
    \end{align*}
    We then observe (see \cite[Lemma 2.3]{MNP12}) 
    that for any $x,y\in\mathbb{R}^n$, $|y-x| < s < m(y)$ 
    implies $s < 2m(x)$, and therefore
    \begin{equation*}
      \Gamma'(x) \subset \Gamma (x) \subset \bigcup_{0<s<2m(x)} B(x,s) \times \{ s \} .
    \end{equation*}    
    Moreover, $\gamma (B(y,s)) \eqsim \gamma (B(x,s))$ 
    when $|y-x| < s < \delta \widetilde{m}(y)$, and hence
    \begin{equation*}
      \iint_{\Gamma'(x)} 
      |s^2L e^{-s^2L}f(y)|^2 
      \,\frac{d\gamma (y)\, ds}{s \gamma (B(y,s))}
      \lesssim \int_0^{2m(x)} \frac{1}{\gamma (B(x,s))}\int_{B(x,s)} |s^2L e^{-s^2L}f(y)|^2 
  \, d\gamma (y) \, \frac{ds}{s}
    \end{equation*}
    for every $x\in\mathbb{R}^n$, which shows that 
    $\| u \|_{\mathfrak{t}^1(\gamma)} \lesssim \| Sf \|_1$ as required.
\end{proof}

For $\pi_2$ and $\pi_3$ (more precisely, for Proposition \ref{pi2} and
Lemma \ref{admissibleL1}) we need the following two lemmas concerning pointwise kernel
estimates.

\begin{lemma}
\label{kernelest}
  Let $j=0,1$. For all $x,y\in\mathbb{R}^n$ we have the pointwise kernel estimate
  \begin{equation*}
    |t^j \partial_t^j M_t(x,y)|
    \lesssim t^{-n/2} \exp \Big( -\frac{|x-y|^2}{8t} \Big)
    \exp \Big( \frac{|x|^2 + |y|^2}{2} \Big) , \quad 0 < t \leq 1 .
  \end{equation*}
  As a consequence, for all $0 < t \leq 1$ we have
  \begin{equation*}
    \| 1_{E'} (tL)^j e^{-tL} 1_E \|_{1\to\infty}
    \lesssim t^{-n/2} \exp \Big( -\frac{d(E,E')^2}{8t} \Big)
    \sup_{\substack{x\in E \\ y\in E'}} \exp \Big( \frac{|x|^2 + |y|^2}{2} \Big) ,
  \end{equation*}
  whenever $E,E'\subset\mathbb{R}^n$.
\end{lemma}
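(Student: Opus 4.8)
The plan is to derive the pointwise bound by hand from the explicit formula for the Mehler kernel, and then obtain the operator estimate by taking suprema of the kernel. Write
\[
  M_t(x,y) = (1-e^{-2t})^{-n/2}\exp\big(-a(t)|x-y|^2\big)\exp\big(b(t)(|x|^2+|y|^2)\big),
\]
with $a(t) = e^{-t}/(1-e^{-2t})$ and $b(t) = e^{-t}/(1+e^{-t})$. On $(0,1]$ the function $s\mapsto 1-e^{-2s}$ is concave, so $(1-e^{-2})t \le 1-e^{-2t} \le 2t$; hence $(1-e^{-2t})^{-n/2}\lesssim t^{-n/2}$, $a(t)\ge e^{-1}/(2t)\ge 1/(8t)$ (here we use $2e<8$), and $b(t)<1/2$. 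These three elementary facts immediately give the case $j=0$.

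For $j=1$ I would differentiate the logarithm of $M_t$ to get $t\partial_t M_t = M_t\cdot P(t,x,y)$, where
\[
  P(t,x,y) = -n\,\frac{te^{-2t}}{1-e^{-2t}} - t\,a'(t)\,|x-y|^2 + t\,b'(t)\,(|x|^2+|y|^2).
\]
A direct computation gives $a'(t) = -e^{-t}(1+e^{-2t})/(1-e^{-2t})^2$ and $b'(t) = -e^{-t}/(1+e^{-t})^2$, and on $(0,1]$ the bounds above yield $|te^{-2t}/(1-e^{-2t})|\lesssim 1$, $|t\,a'(t)|\lesssim 1/t$, and $|t\,b'(t)|\lesssim t$. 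It then remains to absorb the two polynomial factors into the Gaussian parts of $M_t$. For the $|x-y|^2/t$ term one uses that $a(t)-1/(8t)\ge c_0/t$ on $(0,1]$ with $c_0 = (4-e)/(8e)>0$, so that
\[
  \frac{|x-y|^2}{t}\exp\big(-a(t)|x-y|^2\big) \le \frac{|x-y|^2}{t}\exp\Big(-\frac{c_0|x-y|^2}{t}\Big)\exp\Big(-\frac{|x-y|^2}{8t}\Big) \lesssim \exp\Big(-\frac{|x-y|^2}{8t}\Big)
\]
by $ue^{-u}\le e^{-1}$. For the $|x|^2+|y|^2$ term the key point is that although $b(t)\uparrow 1/2$ as $t\downarrow 0$, the ratio $|t\,b'(t)|/(\tfrac12-b(t)) = 2te^{-t}/(1-e^{-2t})$ stays bounded on $(0,1]$; so, writing $r=|x|^2+|y|^2$,
\[
  |t\,b'(t)|\,r\exp\big(b(t)r\big) = |t\,b'(t)|\,r\exp\big(-(\tfrac12-b(t))r\big)\exp\big(\tfrac12 r\big) \lesssim \exp\big(\tfrac12 r\big),
\]
again by $ue^{-u}\le e^{-1}$. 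Substituting the three resulting estimates back into $M_t|P|$ produces $|t\partial_t M_t|\lesssim t^{-n/2}\exp(-|x-y|^2/(8t))\exp(\tfrac12(|x|^2+|y|^2))$.

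The only genuine obstacle is the bookkeeping of constants: one must retain exactly $8$ in the Gaussian exponent, which forces using the sharper lower bound $a(t)\ge 1/(2et)$ in place of the cruder $a(t)\ge 1/(8t)$, so that a positive surplus $c_0/t$ remains to absorb the polynomial growth introduced by the $t$-differentiation — the inequality $2e<8$ is exactly what makes this go through.

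Finally, since $tLe^{-tL} = -t\partial_t e^{-tL}$ and one may differentiate under the integral sign for $0<t\le1$ (justified by the bounds just proved), the operator $(tL)^j e^{-tL}$ has integral kernel $(-t\partial_t)^j M_t(x,y)$ with respect to $d\gamma$ for $j=0,1$. Hence, for $f$ supported in $E$ with $\|f\|_{L^1(\gamma)}\le1$,
\[
  |1_{E'}(tL)^j e^{-tL}f(x)| \le 1_{E'}(x)\sup_{y\in E}\big|(-t\partial_t)^j M_t(x,y)\big| \le \sup_{\substack{x\in E'\\ y\in E}}\big|(-t\partial_t)^j M_t(x,y)\big|,
\]
and feeding in the pointwise estimate together with $|x-y|\ge d(E,E')$ for $x\in E'$, $y\in E$ gives the stated bound on $\|1_{E'}(tL)^j e^{-tL}1_E\|_{1\to\infty}$.
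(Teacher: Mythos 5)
Your proof is correct and follows essentially the same route as the paper: direct estimation of the explicit Mehler kernel and its logarithmic $t$-derivative on $0<t\le 1$, followed by absorbing the polynomial factors $|x-y|^2/t$ and $t(|x|^2+|y|^2)$ into the Gaussian exponentials, and then integrating the kernel bound to get the $L^1\to L^\infty$ estimate. If anything, your handling of the $(|x|^2+|y|^2)$ term --- keeping the factor $t$ coming from $tb'(t)$ and playing it against the gap $\tfrac12-b(t)\eqsim t$ --- is more explicit than the paper's, which bounds $|t\partial_t M_t|$ by the cruder factor $(1+|x-y|^2/t+|x|^2+|y|^2)M_t$ and leaves the absorption implicit.
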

\begin{proof}
  For $0 < t \leq 1$ we have the elementary estimates
  \begin{equation*}
    \frac{1}{1-e^{-2t}} \eqsim \frac{1}{t}, \quad
    \frac{1}{4t} \leq \frac{e^{-t}}{1-e^{-2t}} \leq \frac{1}{2t}, \quad
    \frac{1}{8} \leq \frac{e^{-t}}{1 + e^{-t}} \leq \frac{1}{2}
  \end{equation*}  
  and the case $j=0$ follows immediately:
  \begin{equation*}
  \begin{split}
    M_t(x,y) &= \frac{1}{(1-e^{-2t})^{n/2}}
  \exp \Big( -\frac{e^{-t}}{1-e^{-2t}} |x-y|^2 \Big)
  \exp \Big( \frac{e^{-t}}{1+e^{-t}} (|x|^2 + |y|^2) \Big) \\
  &\lesssim t^{-n/2}
  \exp \Big( -\frac{|x-y|^2}{4t} \Big)
  \exp \Big( \frac{|x|^2 + |y|^2}{2} \Big) .
  \end{split}
  \end{equation*}
  
  For $j=1$ we calculate:
  \begin{equation*}
    \partial_t M_t(x,y) = \Big( -\frac{ne^{-2t}}{1-e^{-2t}}
    + |x-y|^2 \frac{e^{-t}(1+e^{-2t})}{(1-e^{-2t})^2}
    - (|x|^2 + |y|^2) \frac{e^{-t}}{(1+e^{-t})^2}\Big) M_t(x,y) .
  \end{equation*}
  Using the previous case $j=0$ we then see that
  \begin{equation*}
  \begin{split}
    |t\partial_t M_t(x,y)| &\lesssim \Big( 1 + \frac{|x-y|^2}{t} + |x|^2 + |y|^2 \Big)
    M_t(x,y) \\
    &\lesssim t^{-n/2} \exp \Big( -\frac{|x-y|^2}{8t} \Big)
  \exp \Big( \frac{|x|^2 + |y|^2}{2} \Big) .
  \end{split}
  \end{equation*}
  
  The consequence is also immediate: for any $x\in E'$ we have
  \begin{equation*}
  \begin{split}
    |(tL)^je^{-tL}f(x)| 
    &\lesssim t^{-n/2}
    \int_E \exp \Big( -\frac{|x-y|^2}{8t} \Big)
    \exp \Big( \frac{|x|^2 + |y|^2}{2}\Big) |f(y)| \, d\gamma (y) \\
    &\lesssim t^{-n/2} \exp \Big( -\frac{d(E,E')^2}{8t} \Big)
    \sup_{y\in E} \exp \Big( \frac{|x|^2 + |y|^2}{2}\Big)
    \int_E |f(y)| \, d\gamma (y) .
  \end{split}
  \end{equation*}
\end{proof}

\begin{lemma}
\label{timedilation}
  For $\alpha$ large enough there exists a constant $c>0$ such that
  for all $x,y\in\mathbb{R}^n$ and all $0 < t \leq 1$ we have
  \begin{equation*}
    M_{t/\alpha}(x,y) \lesssim \exp \Big( -\frac{|x-y|^2}{ct} \Big)
    \exp \Big( \alpha t \min (|x|^2,|y|^2) \Big)
    M_t(x,y) ,
  \end{equation*}
  and, consequently,
  \begin{equation*}
    |t\partial_t M_{t/\alpha}(x,y)|
    \lesssim
    \exp \Big( \alpha t \min (|x|^2,|y|^2) \Big)
    M_t(x,y) .
  \end{equation*}
\end{lemma}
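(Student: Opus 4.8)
The plan is to compute the ratio $M_{t/\alpha}(x,y)/M_t(x,y)$ explicitly, bound it by a constant (depending on $n$ and $\alpha$) times the stated exponential factors, and then obtain the derivative estimate by differentiating the kernel and feeding in the first bound. Throughout, $0<t\le 1$, $\alpha\ge 1$, and implied constants may depend on $n$ and $\alpha$.

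Writing $\frac{e^{-s}}{1-e^{-2s}}=\frac{1}{2\sinh s}$ and $\frac{e^{-s}}{1+e^{-s}}=\frac{1}{e^s+1}$, the first step is the identity
\begin{equation*}
  \frac{M_{t/\alpha}(x,y)}{M_t(x,y)}
  = \Big(\frac{1-e^{-2t}}{1-e^{-2t/\alpha}}\Big)^{n/2}
  \exp\Big(-\big(\tfrac{1}{2\sinh(t/\alpha)}-\tfrac{1}{2\sinh t}\big)|x-y|^2\Big)
  \exp\Big(\big(\tfrac{1}{e^{t/\alpha}+1}-\tfrac{1}{e^t+1}\big)(|x|^2+|y|^2)\Big).
\end{equation*}
By elementary estimates on $(0,1]$ (as in the proof of Lemma \ref{kernelest}) the prefactor is $\eqsim 1$; using $s\le\sinh s\le s\cosh 1$ one sees that $a:=\frac{1}{2\sinh(t/\alpha)}-\frac{1}{2\sinh t}>0$ with $a\gtrsim\alpha/t$ for $\alpha$ large; and since $\frac{1}{e^s+1}$ has derivative bounded by $1/4$, $b:=\frac{1}{e^{t/\alpha}+1}-\frac{1}{e^t+1}$ satisfies $0<b\le t/4$. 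By the symmetry of $M_t$ I would assume $|y|\le|x|$, and use $|x|^2\le 2|x-y|^2+2|y|^2$ to obtain
\begin{equation*}
  \frac{M_{t/\alpha}(x,y)}{M_t(x,y)}\lesssim \exp\big(-(a-2b)|x-y|^2\big)\exp\big(3b|y|^2\big).
\end{equation*}
Here $a-2b\gtrsim\alpha/t$ for $\alpha$ large, so the first factor is $\le\exp(-\tfrac{|x-y|^2}{ct})$ for a suitable $c>0$, while $3b\le 3t/4\le\alpha t$; since $\min(|x|^2,|y|^2)=|y|^2$, this proves the first inequality.

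For the derivative estimate, observe that $t\partial_t M_{t/\alpha}(x,y)=(s\partial_s M_s(x,y))|_{s=t/\alpha}$. Differentiating the Mehler kernel exactly as in the proof of Lemma \ref{kernelest}, but retaining the factor $s$ in the term multiplying $|x|^2+|y|^2$ instead of discarding it, gives for $0<s\le 1$
\begin{equation*}
  |s\partial_s M_s(x,y)|\lesssim \Big(1+\frac{|x-y|^2}{s}+s(|x|^2+|y|^2)\Big)M_s(x,y),
\end{equation*}
whence $|t\partial_t M_{t/\alpha}(x,y)|\lesssim \big(1+\tfrac{|x-y|^2}{t}+t(|x|^2+|y|^2)\big)M_{t/\alpha}(x,y)$. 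Inserting the bound for $M_{t/\alpha}/M_t$ just derived (for $|y|\le|x|$) and again using $t(|x|^2+|y|^2)\le 2|x-y|^2+3t|y|^2$, the task reduces to absorbing the polynomial factors into the exponentials: $(1+\tfrac{3}{t}|x-y|^2)\exp(-(a-2b)|x-y|^2)\lesssim 1$ because $a-2b\gtrsim 1/t$, and $(1+3t|y|^2)\exp(3b|y|^2)\le\exp((3+3b/t)t|y|^2)\le\exp(\tfrac{15}{4}t|y|^2)\le\exp(\alpha t|y|^2)$ for $\alpha$ large. This gives $|t\partial_t M_{t/\alpha}(x,y)|\lesssim \exp(\alpha t\min(|x|^2,|y|^2))M_t(x,y)$.

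I expect the main obstacle to be the discrepancy between the $|x|^2+|y|^2$ that appears naturally in both parts and the $\min(|x|^2,|y|^2)$ in the statement. The resolution is that, when $|y|\le|x|$, the off-diagonal decay $\exp(-a|x-y|^2)$ — available precisely because one is comparing a smaller-time kernel with a larger-time one, so that $a>0$ — is strong enough to absorb both the $2b|x-y|^2$ produced by $|x|^2\le 2|x-y|^2+2|y|^2$ and, in the derivative estimate, the $|x-y|^2$-polynomial coming from differentiation. For the latter it is crucial that the coefficient of $|x|^2+|y|^2$ in $s\partial_s M_s$ carries a factor $s$, so that after the time rescaling it is $t(|x|^2+|y|^2)$ rather than $|x|^2+|y|^2$; this is the only reason Lemma \ref{kernelest} cannot be quoted verbatim here.
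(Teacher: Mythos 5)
Your proof is correct, but it takes a genuinely different route from the paper's. The paper works with the alternative form $M_t(x,y)=(1-e^{-2t})^{-n/2}\exp\big(-|e^{-t}x-y|^2/(1-e^{-2t})\big)e^{|y|^2}$ and imports the key comparison from \cite[Lemma 3.4]{P}, which produces the factor $\min(|x|^2,|y|^2)$ directly; it then needs a separate elementary argument to convert decay in $|e^{-t}x-y|$ back into decay in $|x-y|$. You instead compute the ratio $M_{t/\alpha}/M_t$ from the symmetric form of the Mehler kernel and obtain the $\min$ by a WLOG-plus-triangle-inequality argument, paying a term $2b|x-y|^2$ that is absorbed by the off-diagonal gain $a\gtrsim\alpha/t$ coming from the time shrinkage. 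What your approach buys is self-containedness (no reliance on \cite{P}), at the cost of being slightly more computational. It is also worth noting that in the second claim your bookkeeping is sharper than the paper's: the paper quotes the bound $|t\partial_t M_{t/\alpha}|\lesssim(1+|x-y|^2/t+|x|^2+|y|^2)M_{t/\alpha}$ essentially verbatim from Lemma \ref{kernelest}, but the unweighted term $|x|^2+|y|^2$ cannot be absorbed into $\exp(\alpha t\min(|x|^2,|y|^2))$ uniformly in $t$ (consider $x=y$ with $|x|\to\infty$ and $t|x|^2\to 0$); retaining the factor $s$ in the coefficient of $|x|^2+|y|^2$ before rescaling, as you do and as you explicitly flag, is exactly what makes the absorption legitimate.
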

\begin{proof}
  An alternative way to express the Mehler kernel is 
  (see \cite{S})
  \begin{equation*}
    M_t(x,y) = \frac{1}{(1-e^{-2t})^{n/2}}
    \exp \Big( -\frac{|e^{-t}x - y|^2}{1-e^{-2t}} \Big) e^{|y|^2} .
  \end{equation*}
  By \cite[Lemma 3.4]{P} for $\alpha$ large enough
  we have for all $x,y\in\mathbb{R}^n$ and all $0 < t \leq 1$ that
  \begin{equation*}
    \exp \Big( -\frac{|e^{-t/\alpha}x - y|^2}{1-e^{-2t/\alpha}} \Big)
    \leq \exp \Big( -2\frac{|e^{-t}x - y|^2}{1-e^{-2t}} \Big)
    \exp \Big( \frac{t^2\min (|x|^2,|y|^2)}{1-e^{-2t/\alpha}} \Big) .
  \end{equation*}
  Therefore
  \begin{equation*}
    M_{t/\alpha}(x,y) \lesssim \exp \Big( -\frac{|e^{-t}x - y|^2}{1-e^{-2t}} \Big) \exp \Big( \frac{t^2\min (|x|^2,|y|^2)}{1-e^{-2t/\alpha}} \Big) M_t(x,y),
  \end{equation*}
  where, by symmetry, the first exponential factor can be replaced by
  \begin{equation*}
    \exp \Big( -\frac{\max(|e^{-t}x - y|^2, |x-e^{-t}y|^2)}{1-e^{-2t}} \Big) .
  \end{equation*}
  The first claim now follows because for all
  $x,y\in\mathbb{R}^n$ and all $0 < t \leq 1$ we have
  \begin{equation*}
    |x-y|^2 \lesssim \max(|e^{-t}x - y|^2, |x-e^{-t}y|^2) .
  \end{equation*}
  In order to see this,
  let us assume, with no loss of generality, that $|x|\leq |y|$,
  and show that $|x-y|^2 \lesssim |e^{-t}x - y|^2$. Then
  \begin{equation*}
  \begin{split}
    |x-y|^2 &\leq e ( e^{-t}|x|^2 - 2e^{-t} x\cdot y + e^{-t}|y|^2) \\
    &=e(e^{-t}|x|^2 - (1-e^{-t})|y|^2 - 2e^{-t}x\cdot y + |y|^2) ,
  \end{split}  
  \end{equation*}
  where
  \begin{equation*}
    e^{-t}|x|^2 - (1-e^{-t})|y|^2 \leq e^{-2t}|x|^2 ,
  \end{equation*}
  because $|x| \leq |y|$. Indeed,
  \begin{equation*}
    e^{-t}|x|^2 - (1-e^{-t})|x|^2 - e^{-2t}|x|^2
    =(2e^{-t} - 1 - e^{-2t})|x|^2 ,
  \end{equation*}
  where $2e^{-t} - 1 - e^{-2t} \leq 0$ for all $t>0$.
  
  The second claim now follows from the first one since
  \begin{equation*}
  \begin{split}
    |t\partial_t M_{t/\alpha}(x,y)| &\lesssim \Big( 1 + \frac{|x-y|^2}{t} + |x|^2 + |y|^2 \Big)
    M_{t/\alpha}(x,y) \\
    &\lesssim \exp \Big( \alpha t \min (|x|^2,|y|^2) \Big)
    M_t(x,y) .
  \end{split}
  \end{equation*}
  Here the first inequality is obtained as in the proof of Lemma \ref{kernelest} (case $j=1$).
\end{proof}

Let us then consider
\begin{equation*}
  \pi_2 f = \int_0^{\widetilde{m}(\cdot)/\kappa} \widetilde{\Phi}(t^2) t^2L
               e^{-\delta' t^2L} (1_{D^c}(\cdot , t) t^2L e^{-\delta t^2L}f) \, 
               \frac{dt}{t} .
\end{equation*}

\begin{prop}
\label{pi2}
  Let $\kappa \geq 4$.
  For sufficiently small $\delta > \delta' > 0$
  we have $\| \pi_2f \|_1 \lesssim \| f \|_1$.
\end{prop}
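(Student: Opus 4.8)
The plan is to reduce everything to a single off-diagonal estimate for the composed kernel occurring in $\pi_2$, exploiting the large spatial separation that the two cutoffs impose. Since $f$ is a polynomial all the kernels decay and Tonelli applies freely; using $\sup_t|\Phi(t)|<\infty$ and $\widetilde m\le 1$ I would first write
\[
  \| \pi_2 f \|_1 \lesssim \int_0^{1/\kappa} \big\| 1_{G_t}\, t^2L e^{-\delta' t^2L}\big( 1_{F_t}\, t^2L e^{-\delta t^2L} f \big) \big\|_1 \, \frac{dt}{t},
\]
where $G_t=\{x:\widetilde m(x)>\kappa t\}$ is the set cut out by the upper limit $t<\widetilde m(\cdot)/\kappa$, $F_t=\{z:\widetilde m(z)\le t\}$ is the $t$-slice of $D^c$, and $G_t=\emptyset$ once $\kappa t\ge 1$ (so only $t<1/\kappa$ matters). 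Because $\widetilde m\eqsim m$, for $0<t<1/\kappa$ with $\kappa\ge 4$ one has $G_t\subset B(0,1/(\kappa t))$ and $F_t\subset\{|z|\ge 1/(2t)\}$, hence $|x-z|\ge\tfrac12|z|$ and $|z|^2\ge 1/(4t^2)$ whenever $x\in G_t$ and $z\in F_t$; moreover $\int_{G_t}e^{|x|^2/2}\,d\gamma(x)\lesssim 1$.

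Denoting by $K^{(1)}_t$ and $K^{(2)}_t$ the $\gamma$-kernels of $t^2Le^{-\delta' t^2L}$ and $t^2Le^{-\delta t^2L}$, expanding the inner $L^1$-norm and using Tonelli once more, the claim follows once
\[
  \sup_{y\in\mathbb{R}^n}\ \int_0^{1/\kappa} \int_{F_t}\Big( \int_{G_t} |K^{(1)}_t(x,z)|\, d\gamma(x) \Big)\, |K^{(2)}_t(z,y)|\, d\gamma(z)\, \frac{dt}{t}\ \lesssim\ 1 .
\]
For the $x$-integral I would use Lemma \ref{kernelest} with $j=1$ (legitimate since $\delta' t^2\le 1$): $|K^{(1)}_t(x,z)|\lesssim\tfrac1{\delta'}(\delta' t^2)^{-n/2}\exp(-|x-z|^2/(8\delta' t^2))\,e^{(|x|^2+|z|^2)/2}$. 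Inserting $|x-z|^2\ge|z|^2/4$ and $|z|^2\ge 1/(4t^2)$ and then integrating $e^{|x|^2/2}$ over $G_t$, the growth $e^{|z|^2/2}$ is swallowed by the off-diagonal factor once $\delta'$ is small, leaving
\[
  \int_{G_t}|K^{(1)}_t(x,z)|\, d\gamma(x)\ \lesssim\ \frac1{\delta'}\,(\delta' t^2)^{-n/2}\exp\!\Big(-\frac{c}{\delta' t^4}\Big)\exp\!\Big(-\frac{|z|^2}{8\delta'}\Big),
\]
where a surplus of $|z|$-decay has been split off for the next step and $c>0$ is absolute.

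For the $z$-integral I would instead invoke Lemma \ref{timedilation} (applicable since $\alpha\delta t^2\le 1$ for $\delta$ small), which gives $|K^{(2)}_t(z,y)|\lesssim\tfrac1\delta\exp(\alpha^2\delta t^2\min(|z|^2,|y|^2))\,M_{\alpha\delta t^2}(z,y)\le\tfrac1\delta\exp(\alpha^2\delta t^2|z|^2)\,M_{\alpha\delta t^2}(z,y)$. The spare factor $e^{-|z|^2/(8\delta')}$ from the previous display absorbs $\exp(\alpha^2\delta t^2|z|^2)$ as soon as $\delta\delta'$ is small enough, after which $\int_{F_t}M_{\alpha\delta t^2}(z,y)\,d\gamma(z)\le\int_{\mathbb{R}^n}M_{\alpha\delta t^2}(z,y)\,d\gamma(z)=1$ by symmetry and conservativeness of the Mehler semigroup. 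Combining the two estimates yields, uniformly in $y$,
\[
  \int_{F_t}\Big( \int_{G_t}|K^{(1)}_t(x,z)|\,d\gamma(x) \Big)|K^{(2)}_t(z,y)|\, d\gamma(z)\ \lesssim\ \frac1{\delta\delta'}\,(\delta' t^2)^{-n/2}\exp\!\Big(-\frac{c}{\delta' t^4}\Big)\ \lesssim\ \frac1{\delta\delta'}\,\exp\!\Big(-\frac{c'}{t^4}\Big),
\]
which is integrable against $dt/t$ on $(0,1/\kappa)$; therefore $\|\pi_2 f\|_1\lesssim\|f\|_1$. All the smallness requirements — $\delta'\lesssim 1$ for the first step, $\delta\delta'\lesssim 1$ and $\delta\lesssim 1$ for the second, together with $\kappa\ge 4$ — are compatible with $\delta>\delta'$.

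The step I expect to be the crux is taming the Gaussian growth factors $e^{|x|^2/2}$ and $e^{|z|^2/2}$ intrinsic to the Mehler kernel: a direct use of the pointwise bound produces an uncontrolled $e^{|y|^2/2}$ and no decay in $t$. Three features have to conspire. First, $G_t$ is a bounded ball around the origin, so $e^{|x|^2/2}$ is harmless against $d\gamma$. Second, the cutoffs force $|x-z|\gtrsim|z|\gtrsim 1/t$, so the off-diagonal decay of $K^{(1)}_t$ is strong enough both to beat $e^{|z|^2/2}$ (for $\delta'$ small) and to produce the $t$-decay that makes the $dt/t$-integral converge, while still leaving a surplus $e^{-|z|^2/(8\delta')}$ to spend afterwards. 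Third, the time-dilation lemma lets $K^{(2)}_t$ be compared with the probability kernel $M_{\alpha\delta t^2}$, so that the $z$-integration costs nothing and — crucially — the whole bound comes out uniform in $y$, which is exactly what is needed since $f$ is only assumed integrable.
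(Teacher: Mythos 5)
Your argument is correct and follows essentially the same route as the paper: both proofs exploit the separation $|x|\lesssim 1/(\kappa t)$ versus $|z|\gtrsim 1/(2t)$ forced by the two cutoffs, use Lemma \ref{kernelest} to let the off-diagonal Gaussian decay of the outer kernel absorb the $e^{(|x|^2+|z|^2)/2}$ growth (for $\delta'$ small), and use Lemma \ref{timedilation} together with conservativeness/$L^1$-contractivity to handle the inner factor $t^2Le^{-\delta t^2L}$. The only difference is bookkeeping: you compose the kernels and integrate continuously in $t$, obtaining a bound uniform in $y$, whereas the paper discretizes into dyadic time intervals and spatial annuli and sums a double series.
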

\begin{proof}
  We begin by observing that if
  $t\leq \widetilde{m}(x)/4$ and $t > 2^{-k-1}$ for some $k\geq 2$, 
  then $|x| < 2^{k-2}$.
  Moreover, if
  $t\geq \widetilde{m}(y)$ and $t\leq 2^{-k}$, then $|y| \geq 2^{k-1}$.
  
  We then decompose $\pi_2f$ (using boundedness of $\Phi$) as follows:
  \begin{equation}
  \label{pi2dec}
    \begin{split}
    \| \pi_2 f \|_1 &=
    \Big\| \int_0^{\widetilde{m}(\cdot)/\kappa} \widetilde{\Phi}(t^2) t^2Le^{-\delta' t^2L}
    (1_{D^c}(\cdot , t) t^2Le^{-\delta t^2L}f ) \, \frac{dt}{t} \Big\|_1 \\
    &\lesssim \sum_{k=2}^\infty \int_{2^{-k-1}}^{2^{-k}} 
    \| 1_{B(0,2^{k-2})} t^2L e^{-\delta' t^2L}
    (1_{\mathbb{R}^n\setminus B(0,2^{k-1})} t^2Le^{-\delta t^2L}f ) \|_1 \, \frac{dt}{t}\\
    &\leq \sum_{k=2}^\infty \sum_{l=1}^\infty
    \int_{2^{-k-1}}^{2^{-k}} \| 1_{B(0,2^{k-2})} t^2L e^{-\delta' t^2L}
    (1_{C_{k+l-1}} t^2Le^{-\delta t^2L}f) \|_1 \, \frac{dt}{t} ,
    \end{split}
  \end{equation}
  where $C_{k+l-1} := B(0,2^{k+l-1})\setminus B(0,2^{k+l-2})$.
  
  First, by Lemma \ref{timedilation}, we choose a $\delta > 0$
  such that for all $0 < t \leq 1$ we have
  \begin{equation*}
    |t^2L e^{-\delta t^2L}f(x)| 
    \lesssim \exp \Big( \frac{t^2|x|^2}{\delta} \Big) |e^{-tL}f(x)| ,
    \quad x\in\mathbb{R}^n .
  \end{equation*}
  Hence, for $2^{-k-1} < t \leq 2^{-k}$ we have
  \begin{equation*}
    \| 1_{C_{k+l-1}} t^2L e^{-\delta t^2L}f \|_1
    \lesssim \exp \Big( \frac{4^{-k} \cdot 4^{k+l-1}}{\delta} \Big)
    \| e^{-tL} f \|_1
    \lesssim \exp \Big( \frac{4^{l-1}}{\delta} \Big) \| f \|_1 .
  \end{equation*}
  
  Then, since the distance between $B(0,2^{k-2})$ and $C_{k+l-1}$ is
  at least $2^{k+l-3}$, we have, by Lemma \ref{kernelest},
  for $2^{-k-1} < t \leq 2^{-k}$ that
  \begin{equation*}
    \begin{split}
      \| 1_{B(0,2^{k-2})} t^2L e^{-\delta' t^2L} 1_{C_{k+l-1}} \|_{1\to 1}
      &\lesssim t^{-n} \exp \Big( -\frac{4^{k+l-3}}{8\delta' t^2} \Big)
      \exp \Big( \frac{4^{k-2} + 4^{k+l-1}}{2} \Big) \\
      &\lesssim 2^{kn} \exp \Big( -\frac{4^{2k+l-5}}{\delta'} + 4^{k+l-1} \Big) .
    \end{split}
  \end{equation*}
  
  Combining the two estimates we see that for $2^{-k-1} < t \leq 2^{-k}$
  we have
  \begin{equation*}  
    \begin{split}
      &\| 1_{B(0,2^{k-2})} t^2L e^{-\delta' t^2L} (1_{C_{k+l-1}} t^2L e^{-\delta t^2L}f) \|_1 \\
      &\lesssim 2^{kn} \exp \Big( -\frac{4^{2k+l-5}}{\delta'} + 4^{k+l-1} + \frac{4^{l-1}}{\delta} \Big) \| f \|_1 \\
      &= 2^{kn} \exp \Big( -4^{k+l+1} \Big( \frac{4^{k-6}}{\delta'} - 4^{-2} - \frac{4^{-2}}{\delta} \Big) \Big) \| f \|_1 \\
      &\lesssim \exp (-4^{k+l}) \| f \|_1 ,
    \end{split}
  \end{equation*}
  where in the last step we chose $\delta' < \delta$ small enough.
  
  The right-hand side of \eqref{pi2dec} is therefore dominated by
  \begin{equation*}
    \sum_{k=2}^\infty \sum_{l=1}^\infty \exp (-4^{k+l}) \| f \|_1 \int_{2^{-k-1}}^{2^{-k}} \frac{dt}{t}
    \lesssim \| f \|_1 .
  \end{equation*} 
\end{proof}

\begin{lemma}
\label{admissibleL1}
  For any $\alpha > 0$ we have
  \begin{equation*}
    \| (e^{-tL}f)|_{t={\widetilde{m}(\cdot)^2/\alpha}} \|_1
    \lesssim \| f \|_1 .
  \end{equation*}
  Moreover, for $\alpha$ large enough we have
  \begin{equation*}
    \| (tLe^{-tL}f)|_{t={\widetilde{m}(\cdot)^2/\alpha}} \|_1
    \lesssim \| f \|_1 .
  \end{equation*}
\end{lemma}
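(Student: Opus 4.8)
The plan is to use that the discretized admissibility function $\widetilde{m}$ is constant on dyadic annuli. Put $E_0 = \{|x|<1\}$ and $E_k = \{2^{k-1}\le |x|<2^k\}$ for $k\ge 1$, so that $\widetilde{m}\equiv 2^{-k}$ on $E_k$ (reading $2^0 = 1$); consequently $(e^{-tL}f)|_{t=\widetilde{m}(\cdot)^2/\alpha}$ agrees on $E_k$ with $e^{-t_kL}f$, where $t_k := 4^{-k}/\alpha$. Using the Mehler kernel representation, $M_t\ge 0$, and Tonelli's theorem,
\begin{equation*}
  \big\|(e^{-tL}f)|_{t=\widetilde{m}(\cdot)^2/\alpha}\big\|_1 \le \int_{\mathbb{R}^n}|f(y)|\Big(\sum_{k\ge 0}\int_{E_k}M_{t_k}(x,y)\,d\gamma(x)\Big)d\gamma(y),
\end{equation*}
so the first assertion reduces to the uniform estimate $\sup_{y}\sum_{k\ge 0}\int_{E_k}M_{t_k}(x,y)\,d\gamma(x)\lesssim 1$.

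For every $t>0$ we have $\int_{\mathbb{R}^n}M_t(x,y)\,d\gamma(x) = 1$ (symmetry of the Mehler kernel together with $e^{-tL}1=1$), so the trivial bound $\int_{E_k}M_{t_k}(x,y)\,d\gamma(x)\le 1$ already handles the $O_\alpha(1)$ indices with $t_k>1$. For the remaining indices ($t_k\le 1$) I would use the symmetric form of the Mehler kernel,
\begin{equation*}
  M_t(x,y)\,d\gamma(x) = \pi^{-n/2}(1-e^{-2t})^{-n/2}\exp\!\Big(-\frac{|x-e^{-t}y|^2}{1-e^{-2t}}\Big)\,dx,
\end{equation*}
a Gaussian probability measure in $x$ centred at $e^{-t}y$ with variance $\eqsim t$. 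Fix $y$ and let $j\ge 0$ be the index with $y\in E_j$; since $0<t_k\le 1$ we have $e^{-1}|y|\le|e^{-t_k}y|\le|y|$. If $|k-j|\le 3$, bound the integral by $1$ (at most seven terms). If $k\ge j+4$, then $E_k$ lies at distance $\gtrsim 2^k$ from $e^{-t_k}y$, while if $k\le j-4$ (vacuous unless $j\ge 4$, in which case $|y|\ge 2^{j-1}$) it lies at distance $\gtrsim 2^j$; in either case the Gaussian tail estimate gives
\begin{equation*}
  \int_{E_k}M_{t_k}(x,y)\,d\gamma(x)\lesssim\exp\!\Big(-\frac{c\,4^{\max(j,k)}}{t_k}\Big) = \exp\big(-c\alpha\,4^{\,k+\max(j,k)}\big),
\end{equation*}
and since $t_k$ shrinks geometrically the sum over all such $k$ is $\lesssim 1$, uniformly in $j$. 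This proves the first assertion.

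For the second assertion, recall $tLe^{-tL}f(x) = -\int_{\mathbb{R}^n}\big(t\partial_t M_t(x,y)\big)f(y)\,d\gamma(y)$ and that, by the chain rule, $(t\partial_t M_t(x,y))|_{t=s/\alpha} = s\,\partial_s M_{s/\alpha}(x,y)$. Evaluating at $s=\widetilde{m}(x)^2\le 1$, the second inequality of Lemma \ref{timedilation} gives, for $\alpha$ large,
\begin{equation*}
  \big|(tLe^{-tL}f)|_{t=\widetilde{m}(\cdot)^2/\alpha}(x)\big| \lesssim \int_{\mathbb{R}^n}\exp\!\big(\alpha\,\widetilde{m}(x)^2\min(|x|^2,|y|^2)\big)\,M_{\widetilde{m}(x)^2}(x,y)\,|f(y)|\,d\gamma(y).
\end{equation*}
On $E_k$ we have $\widetilde{m}(x)^2|x|^2\le 1$, hence $\exp(\alpha\widetilde{m}(x)^2\min(|x|^2,|y|^2))\le e^\alpha$, and integrating in $x$ (Tonelli) reduces the claim to $\sup_y\sum_{k\ge 0}\int_{E_k}M_{4^{-k}}(x,y)\,d\gamma(x)\lesssim 1$ — which is exactly the estimate already established, now with $\alpha=1$ (so every time is $\le 1$ and there are no exceptional indices). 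Therefore $\|(tLe^{-tL}f)|_{t=\widetilde{m}(\cdot)^2/\alpha}\|_1\lesssim_\alpha\|f\|_1$.

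I expect the geometric bookkeeping in the middle step to be the main obstacle: one has to verify that the Gaussian centred at $e^{-t_k}y$, whose width $\eqsim\sqrt{t_k}$ contracts as $k$ grows, misses $E_k$ unless $k$ is within a bounded distance of $j$, and that the resulting decay beats the (roughly $j$) surviving annuli, uniformly in $y$. The remaining ingredients are routine, namely $L^1$-contractivity of the semigroup for the large-time indices and a direct appeal to Lemma \ref{timedilation} for the $tLe^{-tL}$ version.
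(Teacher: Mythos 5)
Your proof is correct, and while its skeleton matches the paper's — both decompose $\mathbb{R}^n$ into the dyadic annuli on which $\widetilde{m}$ is constant, and both reduce the $tLe^{-tL}$ claim to the $e^{-tL}$ claim via the second inequality of Lemma \ref{timedilation} (the paper's exponential factor is $\exp(\widetilde{m}(x)^2|x|^2)\leq e$, yours is $e^\alpha$; both harmless) — the mechanism for the first claim is genuinely different. The paper splits $f = 1_{C_k^*}f + 1_{\mathbb{R}^n\setminus C_k^*}f$ on each annulus, handles the near part by $L^1$-contractivity of the semigroup together with the bounded overlap of the $C_k^*$, and handles the far part with the crude $L^1\to L^\infty$ kernel bound of Lemma \ref{kernelest}, in which the off-diagonal decay $\exp(-d^2/(8t))$ must defeat the growth factor $\exp((|x|^2+|y|^2)/2)$ — hence the threshold $k\geq N+4$ tied to $\alpha$. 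You instead run a Schur-type test: by the Gaussian form of the Mehler kernel and conservativity, $M_t(\cdot,y)\,d\gamma$ is a Gaussian probability measure centred at $e^{-t}y$ of width $\eqsim\sqrt{t}$, and everything reduces to $\sup_y\sum_k\int_{E_k}M_{t_k}(\cdot,y)\,d\gamma\lesssim 1$, proved by trivial bounds for $|k-j|\leq 3$ and Gaussian tail estimates otherwise. This buys a cleaner off-diagonal step (no $e^{|x|^2}$ factors to beat, no $\alpha$-dependent threshold), at the price of the geometric bookkeeping you flag — which does go through: for $k\geq j+4$ the centre satisfies $|e^{-t_k}y|\leq 2^{k-4}$, so $d(E_k,e^{-t_k}y)\gtrsim 2^k$; for $k\leq j-4$ one has $|e^{-t_k}y|\geq e^{-1}2^{j-1}>2^{j-3}$ while $|x|<2^{j-4}$ on $E_k$, so the distance is $\gtrsim 2^j$; and the resulting bounds $\exp(-c\alpha 4^{k+\max(j,k)})$ sum to $O(1)$ uniformly in $j$ (at most $j$ terms each at most $\exp(-c\alpha 4^j)$ on one side, a convergent series in $k$ on the other).
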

\begin{proof}
  Write
  $C_0 = B(0,1)$ and $C_k = B(0,2^k)\setminus B(0,2^{k-1})$ for $k\geq 1$.
  Moreover, let $C_0^* = B(0,2)$, $C_1^* = B(0,4)$, and
  $C_k^* = B(0,2^{k+1})\setminus B(0,2^{k-2})$ for $k\geq 2$.
  
  We first show that for any $\alpha > 0$,
  \begin{equation}
  \label{alphaadm}
    \| (e^{-tL}f)|_{t={\widetilde{m}(\cdot)^2}/\alpha} \|_1
    \lesssim \| f \|_1 .
  \end{equation}
  Denote $\varepsilon = 1/\alpha$ for notational convenience.
  For $x\in C_k$ we have $\widetilde{m}(x)^2 = 4^{-k}$ and hence  
  \begin{equation*}
      \| (e^{-tL}f)|_{t=\varepsilon\widetilde{m}(\cdot)^2} \|_1
      = \sum_{k=0}^\infty \| 1_{C_k} e^{-\varepsilon 4^{-k} L} f \|_1 .
  \end{equation*}
  We split $f$ into $1_{C_k^*}f$ and $1_{\mathbb{R}^n\setminus C_k^*} f$, and
  first estimate
  \begin{equation*}
    \sum_{k=0}^\infty \| 1_{C_k} e^{-\varepsilon 4^{-k}L} (1_{C_k^*}f) \|_1
    \leq \sum_{k=0}^\infty \| 1_{C_k^*} f \|_1 \lesssim \| f \|_1 .
  \end{equation*}
  Fixing an integer $N$ for which $8\varepsilon \leq 4^N$,
  we use the trivial estimate for $k=0,1,\ldots , N+3$:
  \begin{equation*}
    \| 1_{C_k} e^{-\varepsilon 4^{-k}L} (1_{\mathbb{R}^n\setminus C_k^*} f) \|_1
    \leq \| f \|_1 .
  \end{equation*}
  For $k\geq N+4$ we have the decomposition
  \begin{equation*}
    \mathbb{R}^n \setminus C_k^* = B(0,2^{k-2}) \cup \bigcup_{l=2}^\infty 
    C_{k+l} .
  \end{equation*}
  Observing that
  $d(C_k , B(0,2^{k-2})) = 2^{k-2}$ we obtain, by Lemma \ref{kernelest},
  \begin{equation*}
    \begin{split}
    \| 1_{C_k} e^{-\varepsilon 4^{-k}L} 1_{B(0,2^{k-2})} \|_{1\to 1}
    &\lesssim 2^{kn} \exp \Big( - \frac{4^{k-2}}{8\varepsilon 4^{-k}} \Big)
    \exp \Big( \frac{4^k + 4^{k-2}}{2} \Big) \\
    &\leq 2^{kn} \exp ( - 4^{2k-2-N} + 4^k ) \\
    &\lesssim \exp (-4^k) .
    \end{split}
  \end{equation*} 
  Furthermore, since $d(C_k , C_{k+l}) = 2^{k+l-2}$,
  Lemma \ref{kernelest} implies that
  \begin{equation*}
  \begin{split}
     \| 1_{C_k} e^{-\varepsilon 4^{-k}L} 1_{C_{k+l}} \|_{1\to 1}
     &\lesssim 2^{kn} \exp \Big( -\frac{4^{k+l-2}}{8\varepsilon 4^{-k}}\Big) \exp \Big( \frac{4^k + 4^{k+l}}{2} \Big) \\
     &\leq 2^{kn} \exp ( -4^{2k+l-2-N} + 4^{k+l}) \\
     &\lesssim \exp (-4^{k+l}) .
     \end{split}
  \end{equation*}
  Therefore,
  \begin{equation*}
  \begin{split}
    \sum_{k=N+4}^\infty \| 1_{C_k} e^{-\varepsilon 4^{-k}L} (1_{\mathbb{R}^n\setminus C_k^*} f) \|_1 &= \sum_{k=N+4}^\infty \Big( \| 1_{C_k} e^{-\varepsilon 4^{-k}L} (1_{B(0,2^{k-2})} f) \|_1 \\
    &\quad \quad \quad + \sum_{l=2}^\infty \| 1_{C_k} e^{-\varepsilon 4^{-k}L} (1_{C_{k+l}} f) \|_1 \Big) \\
    &\lesssim \sum_{k=N+4}^\infty \Big( \exp (-4^k) \| f \|_1 + \sum_{l=2}^\infty
    \exp (-4^{k+l})\| f \|_1 \Big) \\
    &\lesssim \| f \|_1.
    \end{split}
  \end{equation*}
  We have now proven \eqref{alphaadm} which includes the first case
  from the statement of the lemma.
  
  The second case follows by using
  Lemma \ref{timedilation}, which guarantees that 
  there exists 
  an $\alpha > 0$ such that for all $x,y\in\mathbb{R}^n$
  \begin{equation*}
    \Big| (t\partial_t M_t(x,y))|_{t=\widetilde{m}(x)^2/\alpha} \Big|
    \lesssim \exp \Big( \alpha \frac{\widetilde{m}(x)^2}{\alpha} |x|^2 \Big)  
    M_{\widetilde{m}(x)^2}(x,y) \lesssim M_{\widetilde{m}(x)^2}(x,y) .
  \end{equation*}
  Then
  \begin{equation*}
    \| (tL e^{-tL}f)|_{t=\widetilde{m}(\cdot)^2 / \alpha} \|_1
    \lesssim \| (e^{-tL}f)|_{t=\widetilde{m}(\cdot)^2} \|_1
    \lesssim \| f \|_1 .
  \end{equation*}
\end{proof}

Finally, we turn to
\begin{equation*}
  \pi_3f = \int_{\widetilde{m}(\cdot)/\kappa}^\infty 
  \widetilde{\Phi}(t^2) (t^2L)^2 
               e^{-(\delta'+\delta)t^2L}f \, \frac{dt}{t} .
\end{equation*}

\begin{prop}
\label{pi3}
  Let $0 < \delta,\delta' \leq 1/2$.
  For $\kappa$ large enough
  we have $\| \pi_3f \|_1 \lesssim \| f \|_1 + \| (1+\log_+ |\cdot |) \, Mf \|_1$,
  where $Mf(x) = \sup_{\varepsilon m(x)^2<t\leq 1} |e^{-tL}f(x)|$ and $\varepsilon > 0$ does not depend on $f$.
\end{prop}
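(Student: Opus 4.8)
The plan is to analyze $\pi_3 f$, which integrates over the \emph{complement} of the admissible region, i.e. over times $t > \widetilde{m}(\cdot)/\kappa$. The natural first move is to write $(t^2L)^2 e^{-(\delta'+\delta)t^2L}$ as a derivative in $t$ of a lower-order expression and integrate by parts in $t$, picking up a boundary term at $t = \widetilde{m}(\cdot)/\kappa$ and a bulk term. Concretely, since $\partial_t e^{-ct^2L} = -2ctLe^{-ct^2L}$, one can trade one factor of $t^2L$ for a $t$-derivative; doing this (carefully, keeping track of the $\widetilde\Phi(t^2)$ factor, whose derivative brings in $\widetilde\Phi'$, hence $\Phi'$) should produce a boundary term of the form $\widetilde{m}(\cdot)^{\pm}$-evaluated $tLe^{-tL}f$ — exactly the object controlled by the second half of Lemma \ref{admissibleL1} — plus a bulk integral $\int_{\widetilde{m}(\cdot)/\kappa}^\infty (\ldots) t^2Le^{-ct^2L}f\,\frac{dt}{t}$ with coefficients involving $\Phi$, $t\Phi'$, $t^2\Phi''$, all of which are bounded, and $\Phi'$, $t\Phi''$, which are integrable on $(1,\infty)$ by hypothesis.

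Next I would estimate the bulk term. Split the $t$-integral at $t=1$. For $t \geq 1$ the semigroup $e^{-ct^2L}$ is $L^1(\gamma)$-contractive and $t^2Le^{-ct^2L}$ can be written as $-\tfrac{1}{2c}\,\partial_t(\text{something})$ or simply bounded after one more integration by parts using the $L^\infty$ decay of $sLe^{-sL}$ for large $s$ (equivalently, $\|s L e^{-sL}\|_{2\to 2}\lesssim 1$ together with conservativity), so that the weight $\int_1^\infty(|\Phi'(t)|+t|\Phi''(t)|)\,dt<\infty$ absorbs it and yields an $O(\|f\|_1)$ bound. For the range $\widetilde{m}(\cdot)/\kappa < t \leq 1$, this is where the maximal function must enter: here one cannot use $L^1$-contractivity of $t^2Le^{-t^2L}$ (it fails near $p=1$), so instead I would peel off a single $e^{-tL}$ from the end — writing $t^2Le^{-ct^2L}f = (t^2Le^{-(c t^2 - s)L})(e^{-sL}f)$ for a suitable intermediate time $s \asymp t^2$ chosen so that $s > \varepsilon m(\cdot)^2$ — and then bound $e^{-sL}f$ pointwise by $Mf$ at the appropriate point, while the remaining operator $t^2Le^{-(ct^2-s)L}$ is handled by an off-diagonal/kernel estimate (Lemma \ref{kernelest} or \ref{od}) against the annular decomposition in $|x|$. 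The number of relevant dyadic scales $t \in (\widetilde{m}(x)/\kappa, 1]$ at a point $x$ is $\asymp 1 + \log_+|x|$, which is precisely the source of the weight $(1+\log_+|\cdot|)$ multiplying $Mf$: summing the pointwise bounds over those $\asymp\log|x|$ scales produces $(1+\log_+|x|)Mf(x)$.

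The main obstacle I anticipate is the $\widetilde{m}(\cdot)/\kappa < t \leq 1$ piece: one has to choose the splitting $ct^2 = (ct^2 - s) + s$ so that simultaneously (i) $s > \varepsilon m(x)^2$ with $\varepsilon$ independent of $f$, so $e^{-sL}f(x)$ is genuinely dominated by $Mf(x)$, and (ii) the leftover time $ct^2 - s$ is still comparable to $t^2$ so that the kernel bound for $t^2Le^{-(ct^2-s)L}$ (which needs $0 < ct^2 - s \leq 1$, valid since $t\le 1$ and $\delta,\delta'\le 1/2$) decays fast enough in $|x-y|/t$ to survive the Gaussian doubling loss $e^{C4^k}$ across the annuli $C_k = B(0,2^k)\setminus B(0,2^{k-1})$. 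This forces $\kappa$ to be large (so that $t \gtrsim m(x)/\kappa$ is still large enough relative to $m(x)^2$ once squared) and requires the same kind of careful constant-chasing as in Propositions \ref{pi1} and \ref{pi2}; the subtlety is that, unlike there, the $|x|^2$ growth from the Mehler kernel's $\exp(|x|^2/2)$ factor is \emph{not} beaten by a Gaussian-in-$|x-y|$ term alone but only after one uses $t \asymp m(x) \asymp |x|^{-1}$ on this range, which converts $\exp(|x|^2/2)$-type losses into harmless constants — exactly why the cutoff at $t=\widetilde m(\cdot)/\kappa$ is what makes the argument close. Finally I would note that all computations are first carried out for $f$ a polynomial (as set up in the Strategy) and then extended to $f\in L^1(\gamma)$ by density, and that the resulting $\varepsilon$ depends only on the fixed choices of $\delta,\delta',\kappa$ and the constant $\alpha$ from Lemma \ref{timedilation}, not on $f$.
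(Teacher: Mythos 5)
Your skeleton matches the paper's up to a point: integrate by parts in $t$ (after the substitution $t^2\mapsto t$), control the boundary term at $t=\widetilde{m}(\cdot)^2/\kappa^2$ by Lemma \ref{admissibleL1}, split the bulk at $t=1$, use $L^1$-contractivity together with $\int_1^\infty(|\Phi'|+t|\Phi''|)\,dt<\infty$ for large times, and extract the weight $(1+\log_+|\cdot|)$ from the $\asymp\log|x|$ scales between $\widetilde{m}(x)^2/\kappa^2$ and $1$. But on the decisive middle range $\widetilde{m}(\cdot)^2/\kappa^2<t\leq 1$ you abandon the simple route and there your argument has genuine gaps. The paper integrates by parts \emph{twice}, so that the remaining bulk integrand is $(2\widetilde{\Phi}'(t)+t\widetilde{\Phi}''(t))\,e^{-(\delta'+\delta)tL}f$ with \emph{no} factor of $L$ left; since $\sup_t(t|\Phi'(t)|+t^2|\Phi''(t)|)<\infty$ gives $|\widetilde{\Phi}'(t)|+t|\widetilde{\Phi}''(t)|\lesssim 1/t$, and $(\delta'+\delta)t>\varepsilon m(x)^2$ on this range, one simply bounds the integrand pointwise by $t^{-1}Mf(x)$ and integrates $dt/t$ to get $(1+\log_+|x|)\,Mf(x)$. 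No kernel estimates, annuli, or off-diagonal bounds are needed for $\pi_3$ at all (they live entirely inside Lemma \ref{admissibleL1}, which handles the two boundary terms $tLe^{-ctL}f$ and $e^{-ctL}f$ at $t=\widetilde{m}(\cdot)^2/\kappa^2$). The presence of $\Phi''$ in the hypotheses is exactly the signal that a second integration by parts is intended.

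Your substitute for this step --- factoring $t^2Le^{-ct^2L}f=t^2Le^{-(ct^2-s)L}(e^{-sL}f)$ with $s\asymp t^2$, dominating $e^{-sL}f$ by $Mf$, and controlling $t^2Le^{-(ct^2-s)L}$ by off-diagonal estimates over annuli --- does not close as stated, for two reasons. First, $t^2Le^{-(ct^2-s)L}$ is precisely the type of operator that is \emph{not} uniformly bounded on $L^1(\gamma)$ (as recalled in the introduction), so ``handled by an off-diagonal/kernel estimate against the annular decomposition'' conceals the entire difficulty: the Gaussian growth $\exp((|x|^2+|y|^2)/2)$ in Lemma \ref{kernelest} must be beaten, and combining a pointwise bound on $e^{-sL}f$ with an operator bound on the outer factor does not yield a pointwise bound by $Mf(x)$ but at best an average of $Mf$ against a non-integrable kernel. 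Second, the domination $|e^{-sL}f(y)|\leq Mf(y)$ requires $s>\varepsilon m(y)^2$, but your $s$ is calibrated to $x$ via $t>\widetilde{m}(x)/\kappa$; for $y$ much closer to the origin than $x$ one has $m(y)^2\gg m(x)^2\gtrsim s$, and the domination fails exactly on the annuli where the Gaussian weight is least favourable. Both problems evaporate once the second integration by parts is performed.
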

\begin{proof}
  Integrating by parts we obtain
  \begin{equation*}
  \begin{split}
    &\int_{\widetilde{m}(\cdot)/\kappa}^\infty \widetilde{\Phi} (t^2) (t^2L)^2 
    e^{-(\delta'+\delta)t^2L}f \, \frac{dt}{t} \\
    &= c \int_{\widetilde{m}(\cdot)^2/\kappa^2}^\infty \widetilde{\Phi} (t) t\partial_t^2 
    e^{-(\delta'+\delta)tL}f \, dt \\
    &= c \Big[ \widetilde{\Phi} (t) t\partial_t e^{-(\delta'+\delta)tL}f \Big]_{t=\widetilde{m}(\cdot)^2/\kappa^2}^\infty + c' \int_{\widetilde{m}(\cdot)^2/\kappa^2}^\infty
    (\widetilde{\Phi} (t) + t\widetilde{\Phi}'(t)) \partial_t e^{-(\delta'+\delta)tL}f \, dt .
    \end{split}
  \end{equation*}
  Repeating for the last term we get
  \begin{equation*}
  \begin{split}
    &\int_{\widetilde{m}(\cdot)^2/\kappa^2}^\infty
    (\widetilde{\Phi} (t) + t\widetilde{\Phi}'(t)) \partial_t e^{-(\delta'+\delta)tL}f \, dt \\
    &= c \Big[ (\widetilde{\Phi} (t) + t\widetilde{\Phi}'(t)) e^{-(\delta'+\delta)tL}f \Big]_{t=\widetilde{m}(\cdot)^2/\kappa^2}^\infty + c' \int_{\widetilde{m}(\cdot)^2/\kappa^2}^\infty
    (2\widetilde{\Phi}'(t) + t\widetilde{\Phi}''(t)) e^{-(\delta'+\delta)tL}f \, dt .
    \end{split}
  \end{equation*}
  Now, having assumed that 
  $\sup_{0 < t < \infty} (|\Phi(t)| + t|\Phi'(t)|) < \infty$,
  we may use Lemma \ref{admissibleL1} to choose $\kappa$ large enough so 
  that
  \begin{equation*}
    \Big\| \Big[ \widetilde{\Phi} (t) t\partial_t e^{-(\delta'+\delta)tL}f \Big]_{t=\widetilde{m}(\cdot)^2/\kappa^2}^\infty \Big\|_1
    \lesssim \| (tLe^{-(\delta'+\delta)tL}f)|_{t={\widetilde{m}(\cdot)^2/\kappa^2}} \|_1
    \lesssim \| f \|_1
  \end{equation*}
  and
   \begin{equation*}
    \Big\| \Big[ (\widetilde{\Phi} (t) + t\widetilde{\Phi}'(t)) e^{-(\delta'+\delta)tL}f \Big]_{t=\widetilde{m}(\cdot)^2/\kappa^2}^\infty \Big\|_1
    \lesssim \| (e^{-(\delta'+\delta)tL}f)|_{t={\widetilde{m}(\cdot)^2/\kappa^2}} \|_1
    \lesssim \| f \|_1 .
  \end{equation*}
  Moreover,
  \begin{equation*}  
    \Big\| \int_1^\infty
    (2\widetilde{\Phi}'(t) + t\widetilde{\Phi}''(t)) e^{-(\delta' +\delta)tL}f \, dt \Big\|_1
    \lesssim \int_1^\infty (|\widetilde{\Phi}'(t)| + t|\widetilde{\Phi}''(t)|) 
    \| e^{-(\delta' +\delta)tL}f \|_1 \, dt \lesssim \| f \|_1 .
  \end{equation*}
  Finally, having assumed that 
  $\sup_{0 < t < \infty} (t|\Phi'(t)| + t^2|\Phi''(t)|) < \infty$,
  we get
  \begin{equation*}
  \begin{split}
    \Big| \int_{\widetilde{m}(\cdot)^2/\kappa^2}^1
    (2\widetilde{\Phi}'(t) + t\widetilde{\Phi}''(t)) e^{-(\delta'+\delta)tL}f \, dt \Big|
    &\lesssim 
    \int_{\widetilde{m}(\cdot)^2/\kappa^2}^1
    (|\widetilde{\Phi}'(t)| + t|\widetilde{\Phi}''(t)|) \, |e^{-(\delta'+\delta)tL}f| \, dt \\
    &\lesssim \sup_{\varepsilon m(\cdot)^2<t\leq 1} |e^{-tL}f|
     \int_{\widetilde{m}(\cdot)^2/\kappa^2}^1 \frac{dt}{t} \\
    &\lesssim (1 + \log_+ |\cdot |) \, Mf ,
  \end{split}  
  \end{equation*}
  where $\varepsilon > 0$ is chosen small enough depending on $\delta$, $\delta'$ and $\kappa$.
\end{proof}

\bibliographystyle{plain}
\def\cprime{$'$} \def\cprime{$'$}

\end{document}